\documentclass[12pt,psfig,reqno]{amsart}
\usepackage{txfonts}
\usepackage{amscd}
\usepackage{cite}

\usepackage{epsfig}

\usepackage{mathdots}
\usepackage{amssymb}
\usepackage{amsmath}
\usepackage{amsthm}
\usepackage{amsfonts}

\usepackage{verbatim}
 \usepackage{graphicx}
 \usepackage{epstopdf}
 \usepackage[usenames]{color} 
 \usepackage{cancel}
 \numberwithin{equation}{section}
 
\newtheorem{definition}{Definition}[section]
\newtheorem{theorem}[definition]{Theorem}
\newtheorem{conjecture}[definition]{Conjecture}

\newtheorem{claim}[definition]{Claim}

\newtheorem{lemma}[definition]{Lemma }

\newtheorem{remark}[definition]{Remark}

\numberwithin{equation}{section} 

\begin{document}
\baselineskip=17pt
\title{The Beurling density of the spectrum of self-similar measure generated by Hadamard triple}
\author{Zong-Sheng Liu$^1$}
\author{Xiao-Yu Yan$^{2\;*}$}

\address{$^1$College of Mathematics and Statistics , Hengyang Normal University, Hengyang, Hunan 421002, China; and
$^2$Key Laboratory
of Computing and Stochastic Mathematics (Ministry of Education), School of Mathematics and Statistics, Hunan Normal University, Changsha,
Hunan 410081, P.R. China.}
\email{lzsheng1992@163.com\;(Z.-S. Liu)}
\email{xyyan1103@163.com\;(X.-Y. Yan)}

\thanks{This work is supported in part by the NNSF of China (Nos.12401107, 12071125, 12371072) and the Hunan Provincial NSF (No.2026JJ60112) and by the Application-Oriented Characterized Disciplines, Double First-Class University Project of Hunan Province (Xiang jiao tong [2022]337).}

\begin{abstract}
Let $D\subset\Bbb Z$ with cardinality $q\ge 2$, and let $b\in \Bbb Z$ with $q<b$, and let $\mu:=\mu_{b,D}$ be the associated self-similar measure. It is well known that if there exists $L\subset \Bbb Z$ such that $(b,D,L)$ be a Hadamard triple, then the Beurling dimension of the spectrum of $\mu$ exhibits an intermediate structural property. In this paper, we establish a stronger result that both Beurling dimension and Beurling density of the spectra of $\mu$ can achieve full flexibility simultaneously. More precisely, for any $t\in(0, \frac{\log q}{\log b})$ and $s\in [0,\infty]$, there exists a spectrum $\Lambda:=\Lambda_{t,s}$ of $\mu$ such that
$$\dim_{Be}(\Lambda)=t,\quad D_t^+ (\Lambda)=s.$$
Here,  $\dim_{Be}$ and $D_t^+$ denote the Beurling dimension and the $t$-Beurling density, respectively. We further prove that the set of such spectrum whose Beurling dimension and Beurling density are equal to any fixed $t$ and $s$ has the cardinality of the continuum.
\par This work generalizes a previous result of Lu \cite{Lu}, answers an open question raised by Dai, Fu and He \cite[Conjecture 5.3]{DaiFuHe}, and sheds new light on the fine structural properties of spectra for singularly continuous spectral measures.
\end{abstract}
\maketitle

{\bf 2020 Mathematics Subject Classification}: 28A80; 42C05.
\par{\bf Keywords}: Beurling density; Beurling dimension; Self-similar measure; Spectral measure; Spectrum.

\section{\bf Introduction\label{sect.1}}
\setcounter{equation}{0}
\par Let $\mu$ be a Borel probability measure with compact support on $\Bbb R^n$. $\mu$ is called a spectral measure if there exists a countable set $\Lambda \subset \Bbb R^n$ such that $E(\Lambda):=\{e^{2\pi i\langle \lambda,x\rangle }: \lambda \in \Lambda\}$ forms an orthonormal basis for $L^2(\mu)$, where $\langle\cdot, \cdot \rangle$ is the standard inner product in $\Bbb R^n$. The set $\Lambda$ is then called a spectrum for $\mu$. If a spectral measure $\mu$ is the Lebesgue measure on a measurable set $\Omega$, then we say that $\Omega$ is a spectral set. The theory of spectral measures could date back to 1974. In the seminal paper\cite{F}, Fuglede conjectured that $\Omega$ is a spectral set if and only if $\Omega$ is a translational tile. The conjecture remained open until 2004 and it was disproved by Tao\cite{T}and Matolcsi\cite{M}. Although the conjecture was proven to be false in both directions on $\Bbb R^n$ for $n\geq 3$\cite{KM}, but it is still open for $n=1, 2$ and the study of the conjecture remains active until today\cite{FFLS}, \cite{LM}. Moreover, it has led to the development of the research of spectral measures. It is very attractive when we consider the spectrality of fractal measures. In 1998, Jorgensen and Pedersen\cite{JP} initiated an investigation of spectral property of fractal measures. They discovered the first continuous singular spectral measure: the standard middle-fourth Cantor measure. This celebrated discovery opened up a new field in the orthogonal harmonic analysis of fractal measures. Since then, there has been a lot of research on self-similar/self-affine spectral measures (see \cite{AL1},\cite{D}-\cite{DC},\cite{DutHauLai},\cite{HLL},\cite{HL},\cite{LW}, \cite{Li}-\cite{LMW}, \cite{LL}, \cite{LDL},\cite{W} and so on), as well as the convergence properties of the associated Fourier series\cite{DutHanSun2},\cite{S2} and the peculiar properties of spectra for fractal measures\cite{ADH},\cite{AL2},\cite{FHW}.

Let $M\in M_n(\Bbb R)$ be an expanding matrix (all its eigenvalues have modulus strictly greater than 1) and $D$ be a set in $\Bbb R^n$ with cardinality $1<\# D<\infty$. This pair $(M, D)$ defines the affine iterated function system (IFS)
$$
\Phi=\Big\{\phi_d(x):\phi_d(x)=M^{-1}(x+d),d\in D \Big\}.
$$
Hutchinson\cite{H} proved that this IFS determines a unique nonempty compact set $T$, called an attractor, and a Borel probability measure $\mu_{M,D}$ supported on $T$ satisfying
\[
T=\bigcup_{d\in D} \phi_d(T), \quad \mu_{M,D}(\cdot)=\frac{1}{\# D}\sum_{d\in D} \mu_{M,D}(\phi_d^{-1}(\cdot)).
\]
The Borel probability measure $\mu_{M,D}$ is called a self-affine measure and $T$ a self-affine set. In particular, if $M=\rho U$, where $\rho> 1$ and $U$ is an orthonormal matrix, then the corresponding measure  $\mu_{M,D}$ is a self-similar measure and $T$ is called a self-similar set.  The self-affine measure $\mu_{M,D}$ can be expressed as an infinite convolutions of finite atomic measures
\begin{align*}
\mu_{M,D}=\delta_{M^{-1}D}*\delta_{M^{-2}D}*\cdots*\delta_{M^{-k}D}*\cdots,
\end{align*}
where $\delta_E=\frac{1}{\# E}\sum_{e\in E}\delta_e$, $E$ is a finite set and $\delta_e$ is the Dirac measure at the point $e\in E$. The equality holds in weak sense.

It is difficult to classify all spectral measures completely. He et al.\cite{HLL} proved that a spectral measure must be pure type, i.e., it is either discrete with finite support, or absolutely continuous or singularly continuous with respect to the Lebesgue measure. In their research, the Beurling density plays a key role. Beurling density was firstly introduced by Landau\cite{Landau} to study the frame properties of complex exponentials. Dutkay et al. \cite{DHSW} proposed to use the Beurling dimension as some general criteria for the existence of Fourier frame. Let $\Lambda$ be a countable set in $\Bbb R^n$. The $r$-(upper) Beurling density of $\Lambda$ is defined by
\begin{align*}
D_r^+(\Lambda)=\limsup_{h\to\infty}\sup_{x\in \Bbb R^n}\frac{\#(\Lambda \cap B(x,h))}{h^r},
\end{align*}
where $B(x, h)$ denotes the open ball centered at $x$ with radius $h$. Then we can define the Beurling dimension of $\Lambda$ by
\begin{align*}
\dim_{Be} \Lambda=\inf\{r: D_r^+(\Lambda)=0\}=\sup\{r: D_r^+(\Lambda)=\infty\}.
\end{align*}

On the other hand, it is interesting to study the spectra for a given spectral measure. Beurling dimension is very important for  characterizing  the size of spectra.  Dutkay et al.\cite{DHSW} and He et al.\cite{HKTW} proved that Beurling dimension of the spectrum for self-similar spectral measure which satisfies the open set condition is not greater than the Hausdorff dimension of its support. Moreover, they also proved that the two of dimensions are equal under certain condition. Dai et al.\cite{DaiHeLai} constructed a spectrum $\Lambda$ for a self-similar measure with consecutive digits such that $\dim \Lambda=0$. An and Lai\cite{AL2} further proved that there is spectrum with zero Beurling dimension for general self affine spectral measure. Wang and Zhang\cite{WZ} showed that the Beurling dimension of spectra for a class of Moran measures between $0$ and the Hausdorff dimension of its support. Tang and Wu\cite{TW} proved that under some mild condition similar to the self-similar case, the Beurling dimension of any spectrum of the self affine spectral measure is bounded by the pseudo Hausdorff dimension of its support. Shi \cite{Shi} proved that the Beurling dimension of its spectra is bounded by its upper entropy dimension. Iosevich et al.\cite{I} proved that the Beurling dimensions of its frame spectra are bounded by its the Fourier dimension and upper entropy dimension. That is to say, if $\mu$ is a spectral measure with spectrum $\Lambda$, then
\[
\dim_F\mu \leq \dim_{Be} \Lambda \leq \overline{\dim}_e \mu,
\]
where $\dim_F$ and $\overline{\dim}_e \mu$ denote the Fourier dimension and upper entropy dimension of $\mu$ respectively. Li et al.\cite{LWX} generalized this conclusion to extended frame spectral measure.

Along the above researches, there is  a natural conjecture concerning the intermediate value property of Beurling dimension of spectra:
\begin{conjecture}\label{conj1.1}
Let $\mu$ be a singular continuous spectral measure with compact support on  $\Bbb R^n$. Then for any $t\in [\dim_F\mu, \overline{\dim}_e \mu]$, there exists a spectrum $\Lambda_t$ of $\mu$ such that $\dim_{Be} \Lambda_t =t$.
\end{conjecture}
Li and Wu\cite{LW1,LW2} proved that Conjecture \ref{conj1.1} holds for several classes of measures. While Conjecture \ref{conj1.1} addresses intermediate value property for the Beurling dimension of spectra, a further deeper open problem is the so called {\it density intermediate value conjecture} (Conjecture 5.3 in \cite{DaiFuHe}), which concerns the intermediate value property of the upper $r$-Beurling density when we fix the Beurling dimension of spectra.
\begin{conjecture}\label{conj1.2}
Suppose that a singular continuous spectral measure admits at least one spectrum with Beurling dimension $r$. Let
\[
G=\left\{D_r^{+}(\Lambda): \Lambda \text{ is a spectrum of } \mu \text{ and } \dim_{Be}\Lambda=r\right\}
\]
be the set of all upper $r$-Beurling densities realized by spectra with fixed Beurling dimension $r$. Then $G$ is a bounded interval. Furthermore, for every $t\in G$, the collection
\[
\left\{\Lambda: 0\in \Lambda,\ D_r^{+}(\Lambda)=t,\ \dim_{Be}\Lambda=r\right\}
\]
has the cardinality of the continuum.
\end{conjecture}

Conjecture \ref{conj1.2} poses two core questions: for fixed Beurling dimension $r$, (i) the set of attainable upper $r$-Beurling densities forms an interval (intermediate value property); (ii) for each attainable density value, there exist continuum many spectra realizing this exact density. Up to now, very few measure models can verify Conjecture \ref{conj1.2}.

In this paper, we study the Beurling dimension and Beurling density of the spectra of $\mu:=\mu_{b,D}$ which satisfies
\begin{align}\label{1.1}
\mu_{b,D}(\cdot)=\frac{1}{q}\sum_{j=0}^{q-1} \mu_{b,D}(b(\cdot)-d_j).
\end{align}
where $D=\{0=d_0,d_1,\cdots,d_{q-1}\}$ and  $q<b$. To guarantee the spectral property of $\mu$, we require that there is $L\subset \{0,1,\cdots,b-1\}\pmod b$ such that $(b,D,L)$ forms a Hadamard triple. Without loss of generality, we suppose that $L\subset \{0,1,\cdots,b-1\}$. Our main results give an affirmative confirmation of Conjecture \ref{conj1.2} for this class of self-similar spectral measures. We obtain following results.

\begin{theorem}\label{th1.3}
Let $\mu$ be given by \eqref{1.1}. Then for any $t\in(0, \frac{\log q}{\log b})$, $s\in [0,\infty]$, there exists a spectrum $\Lambda:=\Lambda_{t,s}$ of $\mu$ such that $\dim_{Be}(\Lambda)=t, D_t^+ (\Lambda)=s$.
\end{theorem}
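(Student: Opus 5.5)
We build $\Lambda$ through the standard tree (maximal mapping) description of spectra of $\mu_{b,D}$ when $(b,D,L)$ is a Hadamard triple. Replacing $L$ by a translate mod $b$ if necessary, we may assume $0\in L$, so that $L$ contains $q$ distinct residues mod $b$. Every integer sequence $\{\ell_k\}$ with $\ell_k\in L+b\Bbb Z$ gives a candidate point $\sum_{k\ge0}b^{k}\ell_k$, provided the sum is finite. Concretely, one fixes at each level $k$ a set $L_k\subset \Bbb Z$ with $L_k\equiv L \pmod b$ and $0\in L_k$, and sets
$$\Lambda=\bigcup_{n\ge1}\bigl(L_0+bL_1+\cdots+b^{n-1}L_{n-1}\bigr).$$
By the known criterion (Łaba--Wang, Dai--He--Lai; cf. Lu's construction), this $\Lambda$ is a spectrum provided the digit sets stay bounded, i.e. $\sup_k\max|L_k|<\infty$. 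This uses that $\mu$ has no "bad" zero cycles after a finite perturbation, which is the setting of Lu's theorem.

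Freedom in the lifts $L_k=L+b m_k$ (with $m_k\in\{0,N\}$ entrywise for a fixed large $N$) lets us spread the points. To get Beurling dimension $t<\log q/\log b$, choose a sparse set $S\subset\Bbb N$ of levels. At levels $k\in S$ the nonzero digits are lifted by $bN$, which pushes points far apart. At levels outside $S$ the digits are left compact.

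A block of $n$ consecutive unlifted levels produces $q^{n}$ points in an interval of length about $b^{n}$, which is local dimension $\log q/\log b$. A lifted level multiplies the spread by a factor of order $b$ without increasing the local count in a window. Choosing the density of $S$ in $[1,n]$ to be $1-\frac{t\log b}{\log q}$ (asymptotically, in the sense of $\limsup$ of the worst window) should make
$$\sup_x\#(\Lambda\cap B(x,h))\asymp h^{t}\,\theta(h),$$
with $\theta$ controlled by the discrepancy of $S$.

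The key steps, in order, are as follows.
\begin{enumerate}
\item Prove a counting lemma. For $h\sim b^{n}$, bound $\sup_x\#(\Lambda\cap B(x,h))$ above and below by $q^{\#([0,n)\setminus S')}$, where $S'$ collects the lifted levels. The supremum is over all windows, so one must handle windows whose points come from arbitrarily high level tails; the lifts of size $b^{k}N$ separate these.
\item Tune $S$ so that the exponent is $t$. Then tune second-order corrections, i.e. when exactly the lifts occur, so that $q^{\#([0,n)\setminus S)}/b^{nt}$ has $\limsup$ equal to any prescribed $s\in(0,\infty)$.
\item Handle the endpoint cases. For $s=0$ or $s=\infty$, let the exponent deviation tend to $0$ but with a sign, e.g. multiply by $n^{\mp1}$, so that the dimension stays $t$.
\end{enumerate}

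Hitting an exact value $s$ is discrete, because the counts are powers of $q$. To realize an arbitrary real $s$, we use the continuous parameter $h$ in the $\limsup$. Within each scale $[b^{n},b^{n+1}]$ the ratio $\#/h^{t}$ sweeps an interval. By choosing lifts of varying magnitudes $N_k$ (still bounded), we place the supremum of that sweep at $s_n\to s$, with $s_n\le s+o(1)$ at all scales.

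The main obstacle is step 1 combined with this exact tuning. The upper bound must hold uniformly over all centers $x$ and must rule out clusters formed by interaction between different levels. The lower bound requires a window that attains the sup asymptotically. I would first establish a two-sided estimate with explicit constants depending only on $b,q,N$. I would then use a diagonal/interleaving argument to make the $\limsup$ exact.
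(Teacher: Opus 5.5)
There is a genuine gap, and it is in the basic mechanism rather than in the fine tuning: bounded lifts cannot lower the Beurling dimension at all. Because $(b,D,L)$ is a Hadamard triple, the elements of $L$ are pairwise incongruent mod $b$. Suppose every $L_k\equiv L \pmod b$ satisfies $\max_{\ell\in L_k}|\ell|\le C$, as with your $m_k\in\{0,N\}$ or bounded $N_k$. Then the map $(\ell_0,\dots,\ell_{n-1})\mapsto\sum_{k<n}b^k\ell_k$ is injective (read off residues mod $b$ level by level), and its image lies in $[-Cb^n,Cb^n]$. Hence
\[
\#\bigl(\Lambda\cap B(0,Cb^n+1)\bigr)\ge q^n \quad\text{for every } n,
\]
so $D^+_r(\Lambda)=\infty$ for all $r<\log q/\log b$, i.e.\ $\dim_{Be}\Lambda=\log q/\log b$, however $S$ and the $N_k$ are chosen. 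Your step~1 upper bound $q^{\#([0,n)\setminus S')}$ already fails for windows centred at $0$. Lifting a level-$k$ digit by $bN$ moves a point by at most $b^{k+1}N$. That enlarges the relevant window only by a constant factor and never removes the factor $q$ that level $k$ contributes to the count. Steps~2 and~3 rest on this counting lemma and fall with it.

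To reach $t<\log q/\log b$ you need an unbounded, point-dependent displacement. In the paper, the core set $\widetilde{\Lambda}^{(t,s)}(\omega)$ allows all $q$ digits only on the $\Xi$-blocks of $\Omega_k$ and only the digit $0$ on the $\Theta$-blocks. Every point that uses a nonzero digit at a suppressed position is expelled by $b^{\mathcal{I}_k+\mathcal{J}'_k+(N+2)(j+1)}\tilde{l}$, with $j$ running up to $\sigma_k\to\infty$. This makes $\overline{\Lambda}^{(t,s)}(\omega)$ $b$-lacunary, hence of Beurling dimension $0$ (Lemmas~\ref{lem2.5} and~\ref{lem2.4}). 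Dimension and density are then read off the core alone, and the exact value $s$ comes from the arithmetic choice of block lengths in Lemma~\ref{lem2.6}, not from sweeping $h$.

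Once digits are unbounded, you cannot fall back on a ``bounded digits suffice'' spectrality criterion, and that criterion is false anyway. Take $b=4$, $D=\{0,2\}$, $L=\{0,3\}$. Then $\mathcal{Z}(\widehat{\mu})=\{4^ka:k\ge0,\ a\ \text{odd}\}$, and every $\lambda\in\Lambda(4,\{0,3\})$ satisfies $\lambda+1\in\mathcal{Z}(\widehat{\mu})$. So $\{-1\}\cup\Lambda(4,\{0,3\})$ is still orthogonal, and this bounded-digit set is incomplete.

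Completeness therefore has to be proved directly, e.g.\ via Lemma~\ref{lem2.1}, which needs a uniform lower bound for $|\widehat{\mu}(b^{-(\mathcal{I}_k+\mathcal{J}'_k)}(\lambda+\xi))|$. The paper gets this bound in three ways: it chooses each lift from Lu's Lemma~\ref{lem2.2} according to $b^{-(i_k+m_kj_k)}l$, it chooses $\tilde{l}$ suitably, and it takes $i_k$ large enough that \eqref{3.4} holds. Your proposal has no counterpart of this step.
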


Furthermore, we can describe the cardinality of the spectra whose Beurling dimension and Beurling density are equal to any fixed $t$ and $s$. More precisely, we have the following surprising result.
\begin{theorem}\label{th1.4}
Let $\mu$ be given by \eqref{1.1}. Then for any $t\in(0, \frac{\log q}{\log b})$, $s\in [0,\infty]$, the set
\[
\mathcal{V}^{(s)}_{t}(\mu)=\{\Lambda: \Lambda \;\text{is a spectrum of}\;\mu \;\text{and}\;\dim_{Be}(\Lambda)=t,D_t^+ (\Lambda)=s\}
\]
has the cardinality of the continuum.
\end{theorem}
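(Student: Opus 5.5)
Theorem \ref{th1.4} will follow from the construction behind Theorem \ref{th1.3} once we locate a free binary parameter that changes the spectrum but not its dimension or density. The standard description of spectra for Hadamard triple measures is via maximal trees or digit expansions. Since $(b,D,L)$ is a Hadamard triple, every set
\[
\Lambda=\Big\{\sum_{k=1}^{n}\ell_{k}(\ell_1\cdots\ell_{k-1})\,b^{k-1}:\ n\ge1,\ \text{digits chosen along a tree}\Big\}
\]
built from a tree labelling is mutually orthogonal. Here, at each node, the children are labelled by a complete residue set $L+b\,m_{\sigma}$ modulo $b$, with the shifts $m_\sigma\in\Bbb Z$ chosen freely (for instance, $L_\sigma=\{\ell+b\,c_\sigma(\ell)\}$). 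Such a set is a spectrum provided the standard completeness condition holds, e.g.\ the \L aba--Wang/Dutkay--Jorgensen criterion: every infinite path eventually has all digits $0$ except for a controlled set, or the path digits stay in a bounded set so that no nontrivial $\mu$-cycle is created.

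I expect the proof of Theorem \ref{th1.3} to realize $\Lambda_{t,s}$ as follows. Take a sparse sequence of levels $n_1<n_2<\cdots$. At level $n_j$, insert large shifts $b^{N_j}$ that spread the $q^{n_j}$ points of each block apart. With the right scale ratio, $\#(\Lambda\cap B(x,h))\approx h^{t}$ along $h\approx b^{N_j}$. A multiplicative constant $c_j\to s$ is tuned by choosing how many blocks are clustered; $s=0$ and $s=\infty$ are handled with factors $j^{\mp1}$.

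For Theorem \ref{th1.4}, fix such a construction. For every binary sequence $\omega=(\omega_k)\in\{0,1\}^{\Bbb N}$, modify it at a sparse infinite subsequence of the free nodes, say at levels $n_{j_k}$ with $j_k\to\infty$ rapidly. At each such node choose one of two admissible shifts (e.g.\ $0$ or $b^{M_k}$ with $M_k$ huge) according to $\omega_k$. The remaining steps are as follows.

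(i) Each $\Lambda_\omega$ is still a spectrum. The orthogonality comes from the Hadamard property, which is independent of the shift. Completeness follows from the same criterion used in Theorem \ref{th1.3}, since only finitely many digits along each path are altered by bounded-type choices.

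(ii) $\dim_{Be}\Lambda_\omega=t$ and $D_t^+(\Lambda_\omega)=s$. The modified shifts only move a block of $q^{n_{j_k}}$ points. Because the blocks are placed at distances far larger than their diameter, the counts in balls of radius $h$ change by at most $O(h^{t'})$ with $t'<t$, or by a factor tending to $1$. The limsup is therefore unchanged.

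(iii) $\omega\ne\omega'$ implies $\Lambda_\omega\ne\Lambda_{\omega'}$. The first index where the sequences differ produces an element present in one set and absent in the other, identified by its distinct $b$-adic expansion.

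This gives an injection $\{0,1\}^{\Bbb N}\to\mathcal V_t^{(s)}(\mu)$, hence cardinality at least $\mathfrak c$. The upper bound $\mathfrak c$ is trivial, since spectra are countable subsets of $\Bbb Z$ (as $L\subset\Bbb Z$, and indeed all spectra lie in a countable lattice-like set). The main obstacle is step (ii): making the perturbation uniform enough that the supremum over centers $x$ in $D_t^+$ is not affected. I would first try to force the perturbation levels to be so sparse that their contribution is dominated by $h^{t}/j_k$.
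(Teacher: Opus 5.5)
Your outline has the same overall shape as the paper's argument: a binary sequence $\omega$ selects a spectrum, and injectivity is read off at the first index where two sequences differ. However, the two steps that carry the content are not established. Step (ii), which you flag yourself, is a genuine gap. In a tree-labelled set, changing the label of one node translates the whole infinite subtree below it, not just a block of $q^{n_{j_k}}$ points. You are also translating pieces of the very set that realizes $t$ and $s$. Such a translate can end up next to, or be removed from, other clusters at scales where $\#(\Lambda\cap B(x,h))/h^t$ is near its $\limsup$, and this can change the supremum over $x$ by a constant factor. ``Sufficiently sparse'' does not control this.

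The missing idea is to put the binary freedom where neither $\dim_{Be}$ nor $D_t^+$ can see it. In the paper the spectrum is $\widetilde\Lambda^{(t,s)}\cup\overline\Lambda^{(t,s)}(\omega)$. Since $\phi_k(l,\cdot)=0$ on $T_{i_k+m_kj_k}(\Omega_k)$, the core $\widetilde\Lambda^{(t,s)}$, which realizes the dimension and the density, does not depend on $\omega$ at all. All $\omega$-dependence sits in $\overline\Lambda^{(t,s)}(\omega)$, which is $b$-lacunary (Lemma~\ref{lem3.1}), hence of Beurling dimension $0$ (Lemma~\ref{lem2.5}), hence invisible by Lemma~\ref{lem2.4}. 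So step (ii) costs nothing there.

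Step (i) is not justified by what you wrote either. ``Bounded-type choices'' contradicts shifts $b^{M_k}$ with $M_k$ huge. The \L aba--Wang $\mu$-cycle criterion concerns the canonical digit spectrum, not arbitrary tree labellings. For the criterion actually in play, Lemma~\ref{lem2.1}, the value of the shift matters. Since $\widehat{\mu_P}$ is $b^P$-periodic, $\hat\mu(y+b^P)=\widehat{\mu_P}(y)\,\hat\mu(b^{-P}y+1)$. For $b=4$, $D=\{0,2\}$, $L=\{0,1\}$ one has $\hat\mu(1)=0$, so elements carrying a shift far above the test level violate the required uniform lower bound. The paper makes both alternatives safe by letting $\omega_k$ choose between the two integers $\ell_1^{(\xi)}<\ell_2^{(\xi)}$ of Lemma~\ref{lem2.2}, both satisfying $|\hat\mu(\xi+\ell_i^{(\xi)})|\ge\epsilon_1$; the lacunary pushes use the fixed admissible $\tilde l$. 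Hence Lemma~\ref{lem3.2} holds for every $\omega$. Some other argument might rescue your shifts, but not the one you cite.

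Finally, your upper-bound reasoning is wrong: spectra need not lie in $\Bbb Z$, since $\Lambda+a$ is a spectrum for every $a\in\Bbb R$. The bound $\mathfrak c$ holds simply because $\Bbb R$ has only $\mathfrak c$ countable subsets. That same translation invariance, together with the translation invariance of $\dim_{Be}$ and $D_t^+$, in fact gives the statement as formulated directly from Theorem~\ref{th1.3}. The sets $\Lambda_{t,s}+a$, $a\in[0,1)$, are pairwise distinct (as $\Lambda_{t,s}\subset\Bbb Z$) and all lie in $\mathcal V^{(s)}_t(\mu)$. A perturbative construction like yours or the paper's is only needed for the normalized version with $0\in\Lambda$, as in Conjecture~\ref{conj1.2}.
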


The above two theorems settle the density intermediate value conjecture (Conjecture \ref{conj1.2}) for the measure $\mu_{b,D}$. Theorem \ref{th1.3} shows that for any target Beurling dimension $t\in(0,\frac{\log q}{\log b})$, every $s\in[0,\infty]$ can be realized as the upper $t$-Beurling density of some spectrum with exact Beurling dimension $t$. Theorem \ref{th1.4} further demonstrates that for each pair $(t,s)$, there exist continuum many spectra achieving these prescribed dimension and density simultaneously. These results reveal the rich structure and flexibility of spectra for singular continuous spectral measures and the method we used can be generalized to general measures.

The paper is organized as follows: In Section 2, we introduce some notations, preliminary results and basic properties which will be used throughout the paper. In Section 3, we will prove Theorem \ref{th1.3}-\ref{th1.4}.

\section{\bf Preliminaries \label{sect.2}}

For any Borel probability measure $\mu$ on $\Bbb R$, its Fourier transform is defined by
$$
\hat{\mu}(\xi)=\int e^{-2\pi i\xi x} d\mu(x),\quad \xi\in \Bbb R.
$$
A set $\Lambda$ is an orthogonal set (a maximal orthogonal set) if $E(\Lambda)$ is an orthogonal family (a maximal orthogonal family) in $L^2(\mu)$. Let $f(x)$ be a complex function, the zero set of $f(x)$ is denoted by $\mathcal{Z}(f)$, i.e., $\mathcal{Z(}f)=\{x: f(x) = 0\}$. It is well known that a countable set $\Lambda\subset \Bbb R$ is an orthogonal set of a measure $\mu$ if and only if $(\Lambda-\Lambda)\setminus \{\mathbf{0}\}\subset \mathcal{Z}(\hat{\mu})$.
For the self-similar measure $\mu$ given by \eqref{1.1}, we can write it as
\[
\mu = \delta_{b^{-1}D} * \delta_{b^{-2}D} * \cdots * \delta_{b^{-n}D} * \cdots
= \mu_n * \mu_{>n},
\]
where $\mu_n := \delta_{b^{-1}D} * \delta_{b^{-2}D} * \cdots * \delta_{b^{-n}D}$. The following lemma gives a test to decide whether a countable set is a spectrum of $\mu$ or not.

\begin{lemma}\label{lem2.1}\cite{AHH}
With above notations, suppose that $\{\alpha_n\}_{n=1}^{\infty}$ is an increasing sequence of integers and $\delta$ is a small positive number.
If $0\in \Lambda_{\alpha_n}\subset \Lambda_{\alpha_{n+1}}$, $\Lambda_{\alpha_n}$ is a spectrum of $\mu_{\alpha_n}$ and
\[
\inf_{\lambda\in \Lambda_{\alpha_n},\, \xi\in [0,\delta]} \big|\widehat{\mu}_{>\alpha_n}(\lambda+\xi)\big|^2 >0
\]
for any $n\ge 1$, then $\Lambda=\bigcup_{n=1}^{\infty}\Lambda_{\alpha_n}$ is a spectrum of $\mu$.
\end{lemma}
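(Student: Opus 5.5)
Lemma \ref{lem2.1} is quoted from \cite{AHH}, so we only outline the standard argument behind it. There are two things to show. First, $\Lambda$ is orthogonal. Second, the Parseval-type identity
\[
Q(\xi):=\sum_{\lambda\in\Lambda}|\widehat{\mu}(\xi+\lambda)|^2\equiv 1
\]
holds for all $\xi$. By the Jorgensen--Pedersen criterion, these two facts together imply that $\Lambda$ is a spectrum.

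Orthogonality is immediate from the nesting. Any two distinct $\lambda,\lambda'\in\Lambda$ lie in a common $\Lambda_{\alpha_n}$. Since $\Lambda_{\alpha_n}$ is a spectrum of $\mu_{\alpha_n}$, we have $\widehat{\mu}_{\alpha_n}(\lambda-\lambda')=0$. Because $\widehat\mu=\widehat\mu_{\alpha_n}\widehat\mu_{>\alpha_n}$, it follows that $\widehat\mu(\lambda-\lambda')=0$. Hence $Q\le 1$ by Bessel's inequality. Moreover, $Q$ is an entire function restricted to $\mathbb R$, being a locally uniform limit of sums of squares of entire functions. It therefore suffices to prove $Q(\xi)=1$ for $\xi\in[0,\delta]$, and analytic continuation gives the identity everywhere.

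For the identity on $[0,\delta]$, fix $\xi\in[0,\delta]$ and define
\[
Q_n(\xi)=\sum_{\lambda\in\Lambda_{\alpha_n}}|\widehat\mu(\xi+\lambda)|^2=\sum_{\lambda\in\Lambda_{\alpha_n}}|\widehat\mu_{\alpha_n}(\xi+\lambda)|^2\,|\widehat\mu_{>\alpha_n}(\xi+\lambda)|^2 .
\]
Since $\Lambda_{\alpha_n}$ is a spectrum of $\mu_{\alpha_n}$, the weights $w_n(\lambda):=|\widehat\mu_{\alpha_n}(\xi+\lambda)|^2$ satisfy $\sum_{\lambda\in\Lambda_{\alpha_n}}w_n(\lambda)=1$. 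Write $c:=\inf_{n,\lambda\in\Lambda_{\alpha_n},\,\xi\in[0,\delta]}|\widehat\mu_{>\alpha_n}(\lambda+\xi)|^2$, which we take to be positive uniformly in $n$; this is how the hypothesis is used in \cite{AHH}. Then
\[
1-Q_n(\xi)=\sum_{\lambda\in\Lambda_{\alpha_n}} w_n(\lambda)\bigl(1-|\widehat\mu_{>\alpha_n}(\xi+\lambda)|^2\bigr).
\]
The goal is to show that this tends to $0$.

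The key mechanism is the self-similarity $\widehat\mu_{>\alpha_n}(x)=\widehat\mu(b^{-\alpha_n}x)$ together with the tail estimate
\[
1-Q(\xi)\le \sum_{\lambda\in\Lambda\setminus\Lambda_{\alpha_n}}|\widehat\mu(\xi+\lambda)|^2+\bigl(1-Q_n(\xi)\bigr).
\]
The first term tends to $0$ because $Q$ converges. For the second term, use the lower bound $c$ to bootstrap. Compare $\Lambda_{\alpha_{m}}$ for $m>n$ with its subset $\Lambda_{\alpha_n}$: the mass $1-Q_n$ missed at level $n$ must be captured by the points of $\Lambda_{\alpha_m}\setminus\Lambda_{\alpha_n}$, up to a factor of at least $c$. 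This yields an inequality of the form
\[
1-Q(\xi)\le (1-c)\,(1-Q(\xi)),
\]
or a variant after averaging, and hence $Q(\xi)=1$.

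The main obstacle is making this bootstrapping precise. I would follow \cite{AHH}: first show that $1-Q$ is a nonnegative function satisfying a self-improving inequality derived from $\mu=\mu_{\alpha_n}*\mu_{>\alpha_n}$. Then use $\inf_{[0,\delta]}$ and continuity (in effect, a compactness argument on the infinitely many $\lambda$) to conclude that $Q$ attains $1$ on $[0,\delta]$.
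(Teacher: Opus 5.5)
Your orthogonality argument, the bound $Q\le 1$, and the reduction to $\xi\in[0,\delta]$ by analyticity are fine. You are also right to read the hypothesis as a bound $c>0$ that is uniform in $n$; this is what Lemma 3.2 actually supplies. The uniform reading is in fact necessary. Take $\mu=\mu_{4,\{0,2\}}$, $\alpha_n=n$, and $\Lambda_n=\{0,3\}+4\{0,3\}+\cdots+4^{n-1}\{0,3\}$. Each $\Lambda_n$ is a spectrum of $\mu_n$. Each infimum is positive, because $4^{-n}(\Lambda_n+[0,\delta])$ lies in a compact subset of $[0,1)$, where $\widehat\mu$ has no zeros. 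Yet $-1$ is orthogonal to $\bigcup_n\Lambda_n$, since $1+\lambda\in 4^m(2\mathbb Z+1)\subset\mathcal Z(\widehat\mu)$ for every $\lambda$ in the union.

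The gap is the central step, $Q(\xi)=1$ on $[0,\delta]$. Your tail estimate is vacuous, since $1-Q\le 1-Q_n$ holds trivially. The inequality $1-Q\le(1-c)(1-Q)$ is asserted rather than derived. Your weighted identity only gives $1-Q_n(\xi)\le 1-c$, that is, $Q\ge c$. Nothing in the hypotheses gives a product structure $\Lambda_{\alpha_m}=\Lambda_{\alpha_n}+b^{\alpha_n}(\cdots)$ that would let later levels ``recapture'' the missing mass.

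The missing idea is domination. For $\lambda\in\Lambda_{\alpha_n}$,
\[
w_n(\lambda)=|\widehat\mu_{\alpha_n}(\xi+\lambda)|^2\le c^{-1}|\widehat\mu_{\alpha_n}(\xi+\lambda)|^2\,|\widehat\mu_{>\alpha_n}(\xi+\lambda)|^2=c^{-1}|\widehat\mu(\xi+\lambda)|^2 ,
\]
and $\sum_{\lambda\in\Lambda}c^{-1}|\widehat\mu(\xi+\lambda)|^2\le c^{-1}$ by Bessel. For each fixed $\lambda\in\Lambda$, $w_n(\lambda)\mathbf 1_{\Lambda_{\alpha_n}}(\lambda)\to|\widehat\mu(\xi+\lambda)|^2$, because $\widehat\mu_{\alpha_n}\to\widehat\mu$ pointwise and $\lambda\in\Lambda_{\alpha_n}$ eventually. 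Dominated convergence on the countable set $\Lambda$ then gives $1=\sum_{\lambda\in\Lambda_{\alpha_n}}w_n(\lambda)\to Q(\xi)$.

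Equivalently, in your own notation, each term of $1-Q_n(\xi)=\sum_{\lambda\in\Lambda_{\alpha_n}}w_n(\lambda)\bigl(1-|\widehat\mu(b^{-\alpha_n}(\xi+\lambda))|^2\bigr)$ tends to $0$. Each term is also dominated by $c^{-1}|\widehat\mu(\xi+\lambda)|^2$, so the sum tends to $0$. This is exactly where uniformity of $c$ in $n$ is used, and no bootstrapping or compactness argument is needed.
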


The following lemma is important in constructing the spectrum for our main result.

\begin{lemma}\cite{Lu}\label{lem2.2}
Let $(b,D,L)$ form a Hadamard triple on $\mathbb R$ with $L\subset \{0,1,\cdots,b-1\}$, and let $\mu$ be given as \eqref{1.1}. There exist an integer $N_1>0$ and a constant $\epsilon_1>0$  such that for any $\xi\in [0,1]$, there exist two integer
\[
\ell_1^{(\xi)},\ell_2^{(\xi)}\in \{0,1,\cdots,b^{N_1}-1\}
\]
such that $\ell_1^{(\xi)}<\ell_2^{(\xi)}$ and
\[
|\widehat{\mu}(\xi+\ell_i^{(\xi)})|\ge \epsilon_1
\]
for $i=1,2$.
\end{lemma}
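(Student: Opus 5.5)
The plan is a compactness argument. First, for each $\xi_0\in[0,1]$ I will find two distinct nonnegative integers $\ell_1<\ell_2$ with $\widehat\mu(\xi_0+\ell_i)\neq 0$. Then continuity of $\widehat\mu$ spreads this to a neighbourhood of $\xi_0$, and a finite subcover of $[0,1]$ gives the uniform bound $N_1$ and the uniform constant $\epsilon_1$.

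The basic tool is an integer spectrum $\Lambda\subset\Bbb Z_{\ge 0}$ of $\mu$ with $0\in\Lambda$, together with the Parseval identity
\[
\sum_{\lambda\in\Lambda}|\widehat\mu(\xi+\lambda)|^2=1\qquad(\xi\in\Bbb R),
\]
which holds because $E(\Lambda)$ is an orthonormal basis of $L^2(\mu)$. The candidate is the Łaba--Wang / Dutkay--Jorgensen spectrum built from $L_n=L+bL+\cdots+b^{n-1}L$ (normalising $0\in L$ by translating $L$ modulo $b$). Because $L\subset\{0,\dots,b-1\}$, every element of $L_n$ lies in $[0,b^n-1]$.

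\textbf{Step 1 (one nonzero term).} Fix $\xi_0\in[0,1]$. Parseval forces $\widehat\mu(\xi_0+\lambda)\ne0$ for at least one $\lambda\in\Lambda$.

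\textbf{Step 2 (a second nonzero term).} Suppose only one term were nonzero. Then $|\widehat\mu(\xi_0+\lambda)|=1$, so $e^{2\pi i(\xi_0+\lambda)x}$ is $\mu$-a.e.\ constant on $\operatorname{supp}\mu$. This support contains $0$ and the points $b^{-k}d_1$ for every $k\ge 1$, so it forces $(\xi_0+\lambda)b^{-k}d_1\in\Bbb Z$ for all $k$, hence $\xi_0+\lambda=0$. With $\xi_0\ge0$ and $\lambda\ge0$, this only happens when $\xi_0=0$ and $\lambda=0$.

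\textbf{Step 3 (the case $\xi_0=0$).} Here I apply Parseval at $\xi=1$, which gives some $\lambda\in\Lambda$ with $\widehat\mu(1+\lambda)\ne0$. Then $\ell_1=0$ and $\ell_2=1+\lambda\ge1$ both work, since $\widehat\mu(0)=1$. If a larger $\ell$ is ever needed, note that $D\subset\Bbb Z$ makes $m_D(x)=\frac1q\sum_{d\in D}e^{-2\pi i dx}$ equal to $1$ on $\Bbb Z$, so $\widehat\mu(b^k\ell)=\widehat\mu(\ell)$ for integers $\ell$.

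\textbf{Step 4 (compactness).} For each $\xi_0$ pick $\ell_1^{\xi_0}<\ell_2^{\xi_0}$ as above. By continuity there is an open interval $U_{\xi_0}\ni\xi_0$ and $\epsilon_{\xi_0}>0$ with $|\widehat\mu(\xi+\ell_i^{\xi_0})|\ge\epsilon_{\xi_0}$ on $U_{\xi_0}$. Extract a finite subcover $U_{\xi^{(1)}},\dots,U_{\xi^{(m)}}$. Take $\epsilon_1=\min_j\epsilon_{\xi^{(j)}}$, and choose $N_1$ with $b^{N_1}>\max_{j,i}\ell_i^{\xi^{(j)}}$.

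\textbf{Main obstacle.} The delicate point is the existence of a spectrum inside $\Bbb Z_{\ge0}$, since nonnegativity of the $\ell_i$ is required. When the only $m_L$-cycle is $\{0\}$, $\Lambda=\bigcup_n L_n\subset\Bbb Z_{\ge0}$ works directly. In general, the Dutkay--Jorgensen spectrum contains points $L_n+b^n(-c)$ with $c$ a cycle point in $[0,1]$, and these may be negative.

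If that happens, I would not use a full spectrum. Instead I would use the finite Hadamard identity
\[
\sum_{\lambda\in L_n}|\widehat\mu(\xi+\lambda)|^2=\sum_{\lambda\in L_n}w_\lambda(\xi)\,\big|\widehat\mu\big(b^{-n}(\xi+\lambda)\big)\big|^2,\qquad \sum_{\lambda} w_\lambda(\xi)=1,
\]
where the $w_\lambda$ are the squared moduli of the products of the $m_D$-factors. Its right-hand side should be shown to be positive, using that $\widehat\mu$ has only finitely many zeros in $[0,1]$ and choosing $n$ large. Steps 2--4 then go through with $L_n\subset[0,b^n-1]$ replacing $\Lambda$.
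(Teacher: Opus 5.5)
The compactness skeleton (Steps 1--4) is sound whenever you have a spectrum $\Lambda\subset\mathbb{Z}_{\ge 0}$ with $\sum_{\lambda\in\Lambda}|\widehat\mu(\xi+\lambda)|^2\equiv 1$. However, the obstacle you flag at the end is a genuine gap, and the proposed fallback is false.

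Take $b=4$, $D=\{0,2\}$, $L=\{0,3\}$. This is a Hadamard triple with $L\subset\{0,\dots,3\}$, and $\{1\}$ is a nontrivial cycle: $x\mapsto (x+3)/4$ fixes $1$ and $|m_D(1)|=1$. Here $\mathcal{Z}(\widehat\mu)=\{4^j u:\ j\ge 0,\ u \text{ odd}\}$. For every $\lambda=\sum_k 4^k\epsilon_k$ with $\epsilon_k\in\{0,3\}$, one gets $1+\lambda=4^j\cdot(\text{odd})$, where $j$ is the number of trailing digits $3$. Hence $\sum_{\lambda\in L_n}|\widehat\mu(1+\lambda)|^2=0$ for every $n$. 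In your identity the weights $w_\lambda(1)$ sit entirely on $\lambda=4^n-1$, and $b^{-n}(1+\lambda)=1$ is a zero of $\widehat\mu$. So ``finitely many zeros in $[0,1]$ plus $n$ large'' cannot produce positivity. Step 3, which applies Parseval at $\xi=1$, breaks for the same reason.

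Independently, Step 2 does not transfer to $L_n$. It used that the total mass is exactly $1$ to force a lone nonzero term to have modulus $1$, whereas over $L_n$ the sum is only $\le 1$. The lemma itself does hold at $\xi=1$ in this example, e.g.\ $\widehat\mu(2),\widehat\mu(8)\neq 0$, but $1,7\notin\bigcup_n L_n$.

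So you have to leave the given $L$. One way is to choose $\ell_0\in L$ with $\ell_0-1\not\equiv \ell \pmod b$ for all $\ell\in L$; this is possible since $L \bmod b$ is a nonempty proper subset of $\mathbb{Z}/b\mathbb{Z}$. Put $L'=\{(\ell-\ell_0)\bmod b:\ \ell\in L\}$. Then:
\begin{itemize}
\item $(b,D,L')$ is still a Hadamard triple, and $0\in L'\subset\{0,\dots,b-2\}$;
\item the attractor of $x\mapsto (x+\ell)/b$, $\ell\in L'$, lies in $[0,\frac{b-2}{b-1}]$;
\item if $\gcd(D)=1$, the only point there with $|m_D|=1$ is $0$.
\end{itemize}
By the cycle criterion you already invoke, $\bigcup_n L'_n\subset\mathbb{Z}_{\ge 0}$ is then a spectrum, and your Steps 1--4 go through verbatim. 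In the example, $L'=\{0,1\}$.

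If $\gcd(D)=g>1$, non-integral points of $\frac1g\mathbb{Z}$ can still form cycles, so that case needs an additional argument. For instance, $b=9$, $D=\{0,6,12\}$, $L=\{0,4,8\}$ with $\ell_0=4$ gives $L'=\{0,4,5\}$, which has the cycle $\{\frac12\}$.

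A tool that helps either way: since $m_D$ is $\mathbb{Z}$-periodic, $\widehat\mu(x+b^n m)=\widehat{\mu_n}(x)\,\widehat\mu(b^{-n}x+m)$ for $m\in\mathbb{Z}$. So one nonnegative $\ell$ with $\widehat\mu(\xi_0+\ell)\ne 0$, together with one integer $m\ge 1$ with $\widehat\mu(m)\neq 0$, gives the second index $\ell+b^n m$ for all large $n$. You therefore only need to secure one index per $\xi_0$, and the modulus-one argument of Step 2 becomes unnecessary.
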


To estimate the Beurling dimensions of the spectra, we need following lemmas.

\begin{lemma}\label{lem2.3}\cite{CKS}
Let $\Lambda\subset\mathbb{R}$ be a countable set. Then
\[
\dim_{Be}(\Lambda)
= \varlimsup_{h\to\infty}\sup_{x\in\mathbb{R}}
\frac{\log \#\big(\Lambda \cap \big(x-h,\,x+h\big)\big)}{\log h}.
\]
\end{lemma}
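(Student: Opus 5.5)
Write $N(h)=\sup_{x\in\Bbb R}\#(\Lambda\cap(x-h,x+h))$ for $h>1$. In $\Bbb R$ the open ball $B(x,h)$ is the interval $(x-h,x+h)$, so $D_r^+(\Lambda)=\varlimsup_{h\to\infty}N(h)/h^r$. Set
\[
\beta:=\varlimsup_{h\to\infty}\frac{\log N(h)}{\log h}\in[-\infty,\infty].
\]
The plan is to compare $N(h)$ with powers $h^{r'}$ directly from the definition of $\beta$. We show that $D_r^+(\Lambda)=0$ for every $r>\beta$ and $D_r^+(\Lambda)=\infty$ for every $r<\beta$. Both characterizations of $\dim_{Be}\Lambda$ as an infimum and as a supremum then give $\dim_{Be}\Lambda=\beta$. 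This also uses the elementary fact that $r\mapsto D_r^+(\Lambda)$ is nonincreasing, since $h^{r}\le h^{r''}$ for $h>1$ and $r\le r''$.

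\emph{Upper bound.} Let $r>\beta$ (with $r$ finite) and choose $r'\in(\beta,r)$. By the definition of $\limsup$ there is $h_0>1$ such that $\log N(h)\le r'\log h$ for all $h\ge h_0$, that is, $N(h)\le h^{r'}$. Hence
\[
\frac{N(h)}{h^r}\le h^{r'-r}\longrightarrow 0 \quad (h\to\infty),
\]
so $D_r^+(\Lambda)=0$. It follows that $\dim_{Be}\Lambda=\inf\{r:D_r^+(\Lambda)=0\}\le\beta$.

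\emph{Lower bound.} Let $r<\beta$ and choose $r'\in(r,\beta)$. Then there is a sequence $h_k\to\infty$ with $\log N(h_k)\ge r'\log h_k$, so $N(h_k)\ge h_k^{r'}$. Therefore
\[
\frac{N(h_k)}{h_k^r}\ge h_k^{r'-r}\to\infty,
\]
so $D_r^+(\Lambda)=\infty$. It follows that $\dim_{Be}\Lambda=\sup\{r:D_r^+(\Lambda)=\infty\}\ge\beta$.

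The only delicate step is bookkeeping of degenerate cases rather than any real estimate. If $\Lambda$ is not locally finite, then $N(h)=\infty$ for large $h$, and both sides equal $\infty$ under the natural conventions. If $\Lambda=\emptyset$, or $N(h)=0$ along a sequence, we use $\log 0=-\infty$; the dimension is then $\le 0$ consistently on both sides. We also only consider $h>1$, so that $\log h>0$ and dividing by it preserves the inequalities. Under these conventions the two sets $\{r:D_r^+=0\}$ and $\{r:D_r^+=\infty\}$ are complementary half-lines up to the single endpoint $\beta$. This matches the equality $\inf\{\cdot\}=\sup\{\cdot\}$ assumed in the definition of $\dim_{Be}$.
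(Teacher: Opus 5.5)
Your proof is correct. The paper gives no argument for this lemma; it only quotes it from \cite{CKS}. So the comparison is between a citation and your self-contained proof, which is the standard direct one. You set $N(h)=\sup_x\#(\Lambda\cap(x-h,x+h))$ and compare $N(h)$ with $h^{r'}$ for $r'$ on either side of $\beta$. The step $\sup_x \log\#(\Lambda\cap(x-h,x+h))/\log h=\log N(h)/\log h$ for $h>1$, which you use implicitly, is fine. Counts are integers, so a finite supremum is attained, and the infinite and empty cases match under $\log\infty=\infty$ and $\log 0=-\infty$.

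Your route gives more than the citation. It shows $\{r:D_r^+(\Lambda)=0\}\supseteq(\beta,\infty)$ and $\{r:D_r^+(\Lambda)=\infty\}\supseteq(-\infty,\beta)$, and these two sets are disjoint. Hence the infimum and the supremum in the definition of $\dim_{Be}$ each equal $\beta$ separately. You therefore prove the paper's $\inf=\sup$ claim rather than needing to assume it, and the monotonicity of $r\mapsto D_r^+$ is never used.

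Two small corrections to your degenerate-case bookkeeping; neither affects the argument.
\begin{itemize}
\item $N(h)=\infty$ does not require $\Lambda$ to fail local finiteness. Placing $k$ points in $[k^2,k^2+1)$ for every $k$ gives a locally finite set with $N(h)=\infty$ for all $h>1/2$. Your main argument never assumes $N(h)<\infty$, and it correctly returns $\beta=\dim_{Be}\Lambda=\infty$ here.
\item $N$ is nondecreasing in $h$, so ``$N(h)=0$ along a sequence $h\to\infty$'' can only happen when $\Lambda=\emptyset$. In that case both sides equal $-\infty$.
\end{itemize}
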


\begin{lemma}\label{lem2.4}
Let $\Lambda_1$ and $\Lambda_2$ be a countable set in $\mathbb{R}$. Then\\
\noindent(i)\;$\dim_{Be}(\Lambda_1\cup\Lambda_2)=\max\big\{\dim_{Be}(\Lambda_1),\dim_{Be}(\Lambda_2)\big\}$;\\
\noindent(ii)\;If $r=\dim_{Be}(\Lambda_1)>\dim_{Be}(\Lambda_2)$, then $D_r^{+}(\Lambda_1\cup\Lambda_2)=D_r^{+}(\Lambda_1)$.
\end{lemma}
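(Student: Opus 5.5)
Both parts follow directly from the definitions. We work through the counting function
\[
N_\Lambda(h):=\sup_{x\in\Bbb R}\#\big(\Lambda\cap(x-h,x+h)\big).
\]
Since $B(x,h)=(x-h,x+h)$ in $\Bbb R$, we have $D_r^+(\Lambda)=\limsup_{h\to\infty}N_\Lambda(h)/h^r$. The key elementary fact is the two-sided bound
\[
\max\{N_{\Lambda_1}(h),N_{\Lambda_2}(h)\}\le N_{\Lambda_1\cup\Lambda_2}(h)\le N_{\Lambda_1}(h)+N_{\Lambda_2}(h)\qquad (h>0).
\]
The left inequality holds by monotonicity of counting. The right one holds because $\#((\Lambda_1\cup\Lambda_2)\cap I)\le\#(\Lambda_1\cap I)+\#(\Lambda_2\cap I)$ for every interval $I$, and then one takes the supremum over $x$.

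For (i), use Lemma \ref{lem2.3}, which expresses $\dim_{Be}$ as $\varlimsup_{h\to\infty}\log N_\Lambda(h)/\log h$. The left inequality gives $\dim_{Be}(\Lambda_1\cup\Lambda_2)\ge\max_i\dim_{Be}(\Lambda_i)$. For the reverse inequality, note $N_{\Lambda_1}+N_{\Lambda_2}\le 2\max\{N_{\Lambda_1},N_{\Lambda_2}\}$, so
\[
\log N_{\Lambda_1\cup\Lambda_2}(h)\le\log 2+\max_i\log N_{\Lambda_i}(h).
\]
Dividing by $\log h$ kills the $\log 2$ in the limit. Also, the $\limsup$ of a pointwise maximum of two functions equals the maximum of their $\limsup$s. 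Together these give the upper bound.

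There is one degenerate case to handle. If some $N_{\Lambda_i}(h)=0$ for large $h$, then $\Lambda_i=\emptyset$, and the statement is trivial under the convention that the empty set has dimension $0$, or $-\infty$. If $\Lambda_i$ is nonempty, then $N_{\Lambda_i}(h)\ge1$ for all $h$, so every logarithm is $\ge 0$. Alternatively, one can argue directly from the definition of $\dim_{Be}$ via densities. For $r>\max_i\dim_{Be}\Lambda_i$, both $D_r^+(\Lambda_i)=0$, and subadditivity of $\limsup$ gives $D_r^+(\Lambda_1\cup\Lambda_2)=0$. For $r<\max_i\dim_{Be}\Lambda_i$, the left inequality gives $D_r^+(\Lambda_1\cup\Lambda_2)=\infty$. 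This second route avoids Lemma \ref{lem2.3} entirely, and I would probably present it this way.

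For (ii), set $r=\dim_{Be}\Lambda_1>\dim_{Be}\Lambda_2$. By the definition of Beurling dimension as an infimum, $D_r^+(\Lambda_2)=0$, so $N_{\Lambda_2}(h)/h^r\to0$. Then
\[
\frac{N_{\Lambda_1}(h)}{h^r}\le\frac{N_{\Lambda_1\cup\Lambda_2}(h)}{h^r}\le\frac{N_{\Lambda_1}(h)}{h^r}+\frac{N_{\Lambda_2}(h)}{h^r}.
\]
Taking $\limsup$ yields $D_r^+(\Lambda_1)\le D_r^+(\Lambda_1\cup\Lambda_2)\le D_r^+(\Lambda_1)+0$. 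This holds also when $D_r^+(\Lambda_1)=\infty$, with the usual conventions in $[0,\infty]$.

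The only point needing care is the implication "$r>\dim_{Be}\Lambda_2\Rightarrow D_r^+(\Lambda_2)=0$". It follows because $r\mapsto D_r^+$ is nonincreasing in $r$, since $h^{r}$ increases in $r$ for $h>1$. So $D_{r'}^+(\Lambda_2)=0$ for some $r'<r$ forces $D_r^+(\Lambda_2)=0$. I expect no real obstacle beyond this bookkeeping.
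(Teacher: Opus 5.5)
Your proof is correct. The paper states Lemma \ref{lem2.4} with no proof and no citation, treating it as elementary, so there is no paper argument to compare against. Your argument fills that gap with the standard reasoning: the sandwich
\[
\max\{N_{\Lambda_1}(h),N_{\Lambda_2}(h)\}\le N_{\Lambda_1\cup\Lambda_2}(h)\le N_{\Lambda_1}(h)+N_{\Lambda_2}(h),
\]
together with the monotonicity of $r\mapsto D_r^+$. This justifies both parts, including the cases $D_r^+(\Lambda_1)=\infty$ and $\dim_{Be}=\infty$ (e.g.\ sets with accumulation points).

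Your density-based route for (i) can be made slightly more economical. Since
\[
\{r: D_r^+(\Lambda_1\cup\Lambda_2)=0\}=\{r: D_r^+(\Lambda_1)=0\}\cap\{r: D_r^+(\Lambda_2)=0\},
\]
and each set on the right is an upward ray by monotonicity, the infimum of the intersection is the maximum of the two infima. This uses only the infimum form of the definition, so you need neither the equality with the supremum form nor Lemma \ref{lem2.3}.
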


\begin{lemma}\label{lem2.5}\cite{AL2}
Let $b>1$. If $\Lambda\subset \Bbb R$ is a $b$-lacunary set ( $\lambda \ge b\lambda'$ for any $\lambda,\lambda'\in \Lambda$ with $\lambda>\lambda'$ ), then $\dim_{Be}(\Lambda)=0$.
\end{lemma}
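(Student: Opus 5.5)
The statement is Lemma \ref{lem2.5}: a $b$-lacunary set $\Lambda\subset\Bbb R$ has Beurling dimension $0$. I will use the formula of Lemma \ref{lem2.3}. It then suffices to show that for every interval $(x-h,x+h)$ with $h\ge 2$, the count $\#(\Lambda\cap(x-h,x+h))$ is at most $C\log h$ for a constant $C$ depending only on $b$. Once this holds, $\log\#(\cdots)/\log h\le \log(C\log h)/\log h\to 0$ as $h\to\infty$, uniformly in $x$, so $\dim_{Be}\Lambda=0$.

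To get the logarithmic bound, I first note that lacunarity, read literally as $\lambda\ge b\lambda'$, constrains signs. Two positive elements satisfy $\lambda\ge b\lambda'>\lambda'$. If $\lambda'<0<\lambda$ the inequality is automatic. If $\lambda'<\lambda<0$, then $\lambda\ge b\lambda'$ means $|\lambda|\le b|\lambda'|$, which is not a real restriction. So I would interpret $b$-lacunary as applying to the positive part, or symmetrically to $|\lambda|$ (as in \cite{AL2}). Concretely, I split $\Lambda=\Lambda^+\cup\Lambda^-\cup(\Lambda\cap\{0\})$, where $\Lambda^+$ consists of the positive elements and $\Lambda^-$ of the negative ones. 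By Lemma \ref{lem2.4}(i) it is enough to treat each piece; a single point obviously has dimension $0$.

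For $\Lambda^+$, list its elements in increasing order $\lambda_1<\lambda_2<\cdots$, so that $\lambda_{k+1}\ge b\lambda_k$. Suppose $\lambda_j<\cdots<\lambda_{j+m}$ all lie in an interval of length $2h$. Then $\lambda_{j+m}-\lambda_{j+1}\ge(b^{m-1}-1)\lambda_{j+1}$, and also $\lambda_{j+1}\ge\lambda_{j+1}-\lambda_j\ge(1-1/b)\lambda_{j+1}$. Since consecutive gaps are bounded below by $(b-1)\lambda_1>0$, we get $\lambda_{j+1}\ge(b-1)\lambda_1=:c>0$. Combining, $(b^{m-1}-1)c\le 2h$, hence $m\le 1+\log_b(1+2h/c)$, which is $O(\log h)$. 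The same argument applies to $\Lambda^-$ after reflection, under the symmetric interpretation of lacunarity.

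The main obstacle is this interpretive point about negative elements and the origin. With the literal inequality, an arbitrary negative set could appear, and the lemma would be false unless negative elements are also lacunary in absolute value. Since the result is quoted from \cite{AL2}, I would adopt their convention (lacunarity in $|\lambda|$, or $\Lambda\subset[0,\infty)$). The remaining estimate is routine.
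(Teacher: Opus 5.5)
The paper gives no proof of this lemma (it is quoted from \cite{AL2}), so your argument has to stand on its own. Your counting estimate is the right one. Once the positive part is enumerated as $\lambda_1<\lambda_2<\cdots$ with $\lambda_{k+1}\ge b\lambda_k$, any $m+1$ of its points lying in an interval of length $2h$ satisfy $(b^{m-1}-1)(b-1)\lambda_1\le 2h$. So the count is $O(\log h)$ and $\dim_{Be}=0$. Your remark about signs is also correct: the literal inequality imposes nothing on negative elements. The negative integers satisfy it and have Beurling dimension $1$, so the statement must be read with $\Lambda\subset[0,\infty)$ or with lacunarity in $|\lambda|$.

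The gap is that this repair is not enough, and your first step uses more than you assumed. Listing $\Lambda^+$ as $\lambda_1<\lambda_2<\cdots$ presupposes that $\Lambda^+$ has a least element, and lacunarity does not provide one. For example, $\Lambda=\{b^n:n\in\mathbb{Z}\}\subset(0,\infty)$ satisfies $\lambda\ge b\lambda'$ whenever $\lambda>\lambda'$. Yet $\Lambda\cap(0,1)$ is infinite, so $D_r^+(\Lambda)=\infty$ for every $r$, and the conclusion fails under your convention as well. The same happens for $\Lambda^-$ under the symmetric convention. This is also why your constant $c=(b-1)\lambda_1$, and hence $C$, depends on $\Lambda$ and not only on $b$.

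The missing hypothesis is that $\Lambda$ is locally finite, i.e.\ has no finite accumulation point. For a positive lacunary set this is equivalent to $\inf\Lambda^+>0$, which in turn forces $\min\Lambda^+$ to exist. This is the standing assumption under which Beurling dimension is meaningful. With it, your enumeration is legitimate and the proof is complete. In the paper's only use of the lemma, $\overline{\Lambda}^{(t,s)}(\omega)$ consists of positive integers (see Lemma \ref{lem3.1}), so both hypotheses hold there.
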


\begin{lemma}\label{lem2.6}
(i)For any $t\in(0, \frac{\log q}{\log b})$, $s\in (0,\infty)$, if $\frac{t\log b}{\log q}\in \Bbb Q^c$, there exist sequences $\{n_k\},\{m_k\}$,$\{i_k\},\{j_k\}$ such that
$$
\frac{q^{n_1j_1+\cdots +n_kj_k}}{b^{(i_1+\cdots +i_k+m_1j_1+\cdots +m_kj_k)t}}\leq s <\frac{q^{n_1j_1+\cdots +n_k(j_k+1)}}{b^{(i_1+\cdots +i_k+m_1j_1+\cdots +m_k(j_k+1))t}}
$$
for $k\geq 1$; if $t=\frac{n_0\log q}{m_0\log b}$ for some $n_0, m_0\in \Bbb Z^+$, there exist sequences $\{n_k\},\{m_k\}$,$\{i_k\},\{j_k\}$ such that
$$
\frac{(q^{m_0}-1)^{n_1j_1+\cdots +n_kj_k}}{b^{(i_1+\cdots +i_k+m_1j_1+\cdots +m_kj_k)m_0t}}\leq s <\frac{(q^{m_0}-1)^{n_1j_1+\cdots +n_k(j_k+1)}}{b^{(i_1+\cdots +i_k+m_1j_1+\cdots +m_k(j_k+1))m_0t}}
$$
for $k\geq 1$;
\par \noindent (ii)For any $t\in(0, \frac{\log q}{\log b})$, if $\frac{t\log b}{\log q}\in \Bbb Q^c$, there exist sequences $\{n_k\},\{m_k\}$,$\{i_k\},\{j_k\}$ such that
\begin{align*}
&\frac{q^{n_1j_1+\cdots +n_kj_k}}{b^{(i_1+\cdots +i_k+m_1j_1+\cdots +m_kj_k)t}}\leq  \frac{1}{i_1+\cdots +i_k+n_1j_1+\cdots +n_kj_k},\\ \quad
&\frac{1}{i_1+\cdots +i_k+n_1j_1+\cdots +n_k(j_k+1)}<\frac{q^{n_1j_1+\cdots +n_k(j_k+1)}}{b^{(i_1+\cdots +i_k+m_1j_1+\cdots +m_k(j_k+1))t}}
\end{align*}
for $k\geq 1$; if $t=\frac{n_0\log q}{m_0\log b}$ for some $n_0, m_0\in \Bbb Z^+$, there exist sequences $\{n_k\},\{m_k\}$,$\{i_k\},\{j_k\}$ such that
\begin{align*}
&\frac{(q^{m_0}-1)^{n_1j_1+\cdots +n_kj_k}}{b^{(i_1+\cdots +i_k+m_1j_1+\cdots +m_kj_k)m_0t}}\leq  \frac{1}{i_1+\cdots +i_k+n_1j_1+\cdots +n_kj_k},\\ \quad
&\frac{1}{i_1+\cdots +i_k+n_1j_1+\cdots +n_k(j_k+1)}<\frac{(q^{m_0}-1)^{n_1j_1+\cdots +n_k(j_k+1)}}{b^{(i_1+\cdots +i_k+m_1j_1+\cdots +m_k(j_k+1))m_0t}}
\end{align*}
for $k\geq 1$;
\par \noindent (iii)For any $t\in(0, \frac{\log q}{\log b})$, if $\frac{t\log b}{\log q}\in \Bbb Q^c$, there exist sequences $\{n_k\},\{m_k\}$,$\{i_k\},\{j_k\}$ such that
\begin{align*}
&\frac{q^{n_1j_1+\cdots +n_kj_k}}{b^{(i_1+\cdots +i_k+m_1j_1+\cdots +m_kj_k)t}}\leq  i_1+\cdots +i_k+n_1j_1+\cdots +n_kj_k\\
&<i_1+\cdots +i_k+n_1j_1+\cdots +n_k(j_k+1)<\frac{q^{n_1j_1+\cdots +n_k(j_k+1)}}{b^{(i_1+\cdots +i_k+m_1j_1+\cdots +m_k(j_k+1))t}}
\end{align*}
for $k\geq 1$; if $t=\frac{n_0\log q}{m_0\log b}$ for some $n_0, m_0\in \Bbb Z^+$, there exist sequences $\{n_k\},\{m_k\}$,$\{i_k\},\{j_k\}$ such that
\begin{align*}
&\frac{(q^{m_0}-1)^{n_1j_1+\cdots +n_kj_k}}{b^{(i_1+\cdots +i_k+m_1j_1+\cdots +m_kj_k)m_0t}}\leq  i_1+\cdots +i_k+n_1j_1+\cdots +n_kj_k\\
&<i_1+\cdots +i_k+n_1j_1+\cdots +n_k(j_k+1)<\frac{(q^{m_0}-1)^{n_1j_1+\cdots +n_k(j_k+1)}}{b^{(i_1+\cdots +i_k+m_1j_1+\cdots +m_k(j_k+1))m_0t}}
\end{align*}
for $k\geq 1$.
\end{lemma}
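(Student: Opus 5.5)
The plan is to build all four sequences by a single greedy induction on $k$. The same mechanism handles all three parts; only the target quantity changes. Write
\[
N_{k-1}=n_1j_1+\cdots+n_{k-1}j_{k-1},\qquad M_{k-1}=m_1j_1+\cdots+m_{k-1}j_{k-1},\qquad I_k=i_1+\cdots+i_k .
\]
In the irrational case set
\[
A_k(j)=\frac{q^{N_{k-1}+n_kj}}{b^{(I_k+M_{k-1}+m_kj)t}} .
\]
In the rational case $t=\frac{n_0\log q}{m_0\log b}$, replace $q$ by $q^{m_0}-1$ and $t$ by $m_0t$, exactly as in the statement. Then $A_k(j+1)=r_kA_k(j)$ with
\[
r_k=\frac{q^{n_k}}{b^{m_kt}}\quad\text{or}\quad r_k=\frac{(q^{m_0}-1)^{n_k}}{b^{m_km_0t}} .
\]
The first step is to choose $n_k,m_k\in\Bbb Z^+$ with $r_k>1$.

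In the irrational case, any $n_k,m_k$ with $n_k/m_k>\frac{t\log b}{\log q}$ gives $r_k>1$. For instance $n_k=m_k=1$ works, since $t<\frac{\log q}{\log b}$. If the later application needs $r_k\downarrow 1$, we can instead take $n_k/m_k$ decreasing to $\frac{t\log b}{\log q}$ from above; irrationality guarantees $r_k\neq 1$. In the rational case, $b^{m_0t}=q^{n_0}$, and $t<\frac{\log q}{\log b}$ forces $n_0\le m_0-1$. Since $q\ge 2$, we have $q^{m_0}-1>q^{m_0-1}\ge q^{n_0}$, so $n_k=m_k=1$ gives $r_k=(q^{m_0}-1)/q^{n_0}>1$.

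Next, with $n_k,m_k$ and all earlier indices fixed, choose $i_k$. Increasing $i_k$ divides $A_k(0)$ by $b^{i_kt}$, so $A_k(0)\to 0$ exponentially in $i_k$. Let $T_k(j)=I_k+N_{k-1}+n_kj$ and define the target $\tau_k(j)$ according to the part:
\[
\tau_k(j)=s \ \text{in (i)},\qquad \tau_k(j)=\frac{1}{T_k(j)} \ \text{in (ii)},\qquad \tau_k(j)=T_k(j) \ \text{in (iii)}.
\]
In (ii) the target $1/T_k(0)$ decays only like $1/i_k$, and in (iii) it grows like $i_k$. So for $i_k$ large we get $A_k(0)\le\tau_k(0)$.

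The second step is to choose $j_k$. The left side $A_k(j)=A_k(0)r_k^j$ grows geometrically in $j$. The target is constant in (i), decreasing in (ii), and grows only linearly in (iii). Hence the set $\{j\ge 0: A_k(j)\le\tau_k(j)\}$ is nonempty (it contains $0$) and finite. Let $j_k$ be its maximum. By definition $A_k(j_k)\le\tau_k(j_k)$ and $A_k(j_k+1)>\tau_k(j_k+1)$, which are exactly the displayed inequalities at stage $k$. The middle strict inequality $T_k(j_k)<T_k(j_k+1)$ in (iii) holds trivially because $n_k\ge1$. The induction then proceeds to $k+1$, with $N_k,M_k$ now fixed.

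The only point needing care is that $A_k(j)\le\tau_k(j)$ need not define an initial segment of $j$'s. Taking the maximum of the finite nonempty set still yields both required inequalities, so this causes no problem. The main potential obstacle is therefore not the existence of the sequences but any extra property the later proof may need, such as $r_k\to1$ so that the ratio at the crossing tends to $s$. If that is needed, I will choose $n_k/m_k\downarrow\frac{t\log b}{\log q}$ in the irrational case; in the rational case I will rely on the $(q^{m_0}-1)$-base as given.
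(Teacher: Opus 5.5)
Your argument is correct and proves the lemma as stated. Its skeleton is the paper's: fix the earlier indices, take $i_k$ large so the inequality holds at $j=0$, and let $j_k$ be the last integer before the crossing. The paper gets $j_k$ as $\lfloor x_k\rfloor$, where $x_k$ is the real root of the logarithmic form of the equality.

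The difference is in the inputs. The paper takes $n_k/m_k$ to be Dirichlet approximants of $\alpha=\frac{t\log b}{\log q}$ from above; in the rational case it approximates the irrational number $\alpha'=\frac{n_0\log q}{\log(q^{m_0}-1)}$ instead. You note that any choice with $r_k>1$, even $n_k=m_k=1$, suffices for the displayed inequalities. Taking the maximum of a finite set also spares you two things the paper has to argue: the monotonicity/uniqueness of the implicit equations in (ii)--(iii), and the check that the lower inequality survives every $i_{k+1}\ge 1$.

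What the paper's choice buys is $r_k\to 1$, which Remark~\ref{rem2.7} relies on: part (i) alone only gives $A_k(j_k)\in(s/r_k,s]$. Your suggested fallback would not deliver this. Since $\log r_k=(n_k-m_k\alpha)\log q$, you need $m_k\bigl(\frac{n_k}{m_k}-\alpha\bigr)\to 0$, i.e.\ approximation error $o(1/m_k)$. That is what $\bigl|\frac{n_k}{m_k}-\alpha\bigr|<\frac{1}{m_k^2}$ with $m_k\to\infty$ provides; merely $\frac{n_k}{m_k}\downarrow\alpha$ does not.

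The rational case has the same issue. With base $q$, the condition $\frac{n_k}{m_k}>\frac{n_0}{m_0}$ forces $r_k\ge q^{1/m_0}$, which is why the paper passes to $q^{m_0}-1$. With $n_k=m_k=1$ there, your $r_k=(q^{m_0}-1)/q^{n_0}$ is constant. So for the later application you would have to approximate $\alpha'$ in the same Dirichlet fashion.
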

\begin{proof}
For $t\in(0, \frac{\log q}{\log b})$, there exists $\alpha\in (0,1)$ such that $t=\alpha \frac{\log q}{\log b}$. If $\alpha\in \Bbb Q^c$, from Dirichlet theorem, it follows that there are two positive integer sequences  $\{n_k\},\{m_k\}$ so that $\Big|\frac{n_k}{m_k}-\alpha\Big|<\frac{1}{m_k^2}$. Moreover, we can take $m_k>n_k, \frac{n_k}{m_k}>\alpha, \lim_{k\to\infty}m_k=+\infty$. If $\alpha=\frac{n_0}{m_0}$, we have $t=\alpha'\frac{\log (q^{m_0}-1)}{\log b^{m_0}}$ with $\alpha'\in (0,1)\cap \Bbb Q^c$. By Dirichlet theorem again, we obtain two positive integer sequences, still denoted by  $\{n_k\},\{m_k\}$, so that $\Big|\frac{n_k}{m_k}-\alpha'\Big|<\frac{1}{m_k^2}$, with the conditions $m_k>n_k, \frac{n_k}{m_k}>\alpha', \lim_{k\to\infty}m_k=+\infty$.   \par
(i) We firstly consider $t=\alpha \frac{\log q}{\log b}$ with  $\alpha\in (0,1)\cap \Bbb Q^c$. It is easy to see that there exists $k_0$ such that for any $i_1\geq k_0, 0=\frac{\log 1}{i_1\log b}<t+\frac{\log s}{i_1\log b}$. Note that the function $f(x)=\frac{n_1x\log q}{(i_1+m_1x)\log b}$ is strictly increasing for $x>0$ and satisfies $\lim\limits_{x\to\infty}f(x)=\frac{n_1\log q}{m_1\log b}>t$. Meanwhile, $\frac{\log s}{(i_1+m_1x)\log b}$ is monotone in $x$. Thus there exists a unique $x_1>0$ such that
$$
\frac{n_1x_1\log q}{(i_1+m_1x_1)\log b}=t+\frac{\log s}{(i_1+m_1x_1)\log b}.
$$
Solving the equality yields  $x_1=\frac{ti_1\log b+\log s}{n_1\log q-m_1t\log b}$, which increases with $i_1$.  Set $j_1=\lfloor x_1\rfloor$. By the definition of floor function, we obtain
$$
\frac{n_1j_1\log q}{(i_1+m_1j_1)\log b}\leq t+\frac{\log s}{(i_1+m_1j_1)\log b}, \, t+\frac{\log s}{(i_1+m_1(j_1+1))\log b}<\frac{n_1(j_1+1)\log q}{(i_1+m_1(j_1+1))\log b}.
$$
It follows that
$$
\frac{q^{n_1j_1}}{b^{(i_1+m_1j_1)t}}\leq s<\frac{q^{n_1(j_1+1)}}{b^{(i_1+m_1(j_1+1))t}}.
$$
We claim that $\frac{n_1j_1\log q}{(i_1+m_1j_1+i_2)\log b}\leq t+\frac{\log s}{(i_1+m_1j_1+i_2)\log b}$ for any $i_2\geq 1$. In fact, if $0<s<1$, then
$$
\frac{n_1j_1\log q}{(i_1+m_1j_1+i_2)\log b}\leq \frac{n_1j_1\log q}{(i_1+m_1j_1)\log b}\leq t+\frac{\log s}{(i_1+m_1j_1)\log b}<t+\frac{\log s}{(i_1+m_1j_1+i_2)\log b}.
$$
If $s\geq 1$, then for $j_1>\frac{\log s}{n_1\log q}$ ( $j_1$ can be large enough ), we have
$$
\Big(\frac{n_1j_1}{i_1+n_1j_1}-\frac{n_1j_1}{i_1+m_1j_1+i_2}\Big)\frac{\log q}{\log b}>\Big(\frac{1}{i_1+m_1j_1}-\frac{1}{i_1+m_1j_1+i_2}\Big)\frac{\log s}{\log b}.
$$
Combining with $\frac{n_1j_1\log q}{(i_1+m_1j_1)\log b}\leq t+\frac{\log s}{(i_1+m_1j_1)\log b}$, we obtain the claim.
By the same monotonicity argument, there exists $x_2>0$ such that
$$
\frac{(n_1j_1+n_2x_2)\log q}{(i_1+m_1j_1+i_2+m_2x_2)\log b}=t+\frac{\log s}{(i_1+m_1j_1+i_2+m_2x_2)\log b}.
$$
The explicit solution  $x_2=\frac{t(i_1+m_1j_1+i_2)\log b+\log s-n_1j_1\log q}{n_2\log q-m_2t\log b}$ is increasing in $i_2$. Let $j_2=\lfloor x_2\rfloor$. Then  \begin{align*}
&\frac{(n_1j_1+n_2j_2)\log q}{(i_1+i_2+m_1j_1+m_2j_2)\log b}\leq t+\frac{\log s}{(i_1+i_2+m_1j_1+m_2j_2)\log b},\\
&t+\frac{\log s}{(i_1+i_2+m_1j_1+m_2(j_2+1))\log b}<\frac{(n_1j_1+n_2(j_2+1))\log q}{(i_1+i_2+m_1j_1+m_2(j_2+1))\log b}.
\end{align*}
which implies
$$
\frac{q^{n_1j_1+n_2j_2}}{b^{(i_1+i_2+m_1j_1+m_2j_2)t}}\leq s<\frac{q^{n_1j_1+n_2(j_2+1)}}{b^{(i_1+i_2+m_1j_1+m_2(j_2+1))t}}.
$$
Iterating the above procedure inductively, for any $k\geq 1$, we can construct integer sequences $\{i_l\}_{l=1}^k, \{j_l\}_{l=1}^k$ satisfying
\begin{align*}
&\frac{(n_1j_1+\cdots +n_kj_k)\log q}{(i_1+\cdots +i_k+m_1j_1+\cdots +m_kj_k)\log b}\leq t+\frac{\log s}{(i_1+\cdots +i_k+m_1j_1+\cdots +m_kj_k)\log b},\\
&t+\frac{\log s}{(i_1+\cdots +i_k+m_1j_1+\cdots +m_k(j_k+1))\log b}<\frac{(n_1j_1+\cdots +n_k(j_k+1))\log q}{(i_1+\cdots +i_k+m_1j_1+\cdots +m_k(j_k+1))\log b},
\end{align*}
This is exactly the required inequality for step $k$. Moreover, the inductive prerequisite is always valid for the next iteration. By taking the limiting construction as $k\to\infty$, we obtain the desired infinite sequences.  For $t=\alpha'\frac{\log (q^{m_0}-1)}{\log b^{m_0}}$ with $\alpha'\in (0,1)\cap \Bbb Q^c$, by a similar argument, one draws the corresponding conclusion. Hence we complete the proof of the statement (i).

\par (ii) At first, we prove it for $t=\alpha \frac{\log q}{\log b}$ with  $\alpha\in (0,1)\cap \Bbb Q^c$. Note that there exists $k_0$ such that for any $i_1\geq k_0, 0=\frac{\log 1}{i_1\log b}<t-\frac{\log i_1}{i_1\log b}$. The function $\frac{n_1x\log q}{(i_1+m_1x)\log b}$ is strictly increasing in $x>1$ with limit $\frac{n_1\log q}{m_1\log b}>t$, while $\frac{\log (i_1+n_1x)}{(i_1+m_1x)\log b}$ is also monotone in $x$. Therefore, there exists $x_1>0$ such that
$$
\frac{n_1x_1\log q}{(i_1+m_1x_1)\log b}=t-\frac{\log (i_1+n_1x_1)}{(i_1+m_1x_1)\log b}.
$$
i.e., $x_1$ satisfies the equality
$$
(n_1\log q-m_1t\log b)x_1=i_1t\log b-\log (i_1+n_1x_1).
$$
It is easy to check that the solution $x_1$ increases with $i_1$ and tends to infinity as $i_1\to\infty$. Let$j_1=\lfloor x_1\rfloor$, then
$$
\frac{n_1j_1\log q}{(i_1+m_1j_1)\log b}\leq t-\frac{\log (i_1+n_1j_1)}{(i_1+m_1j_1)\log b}, \, t-\frac{\log (i_1+n_1(j_1+1))}{(i_1+n_1(j_1+1))\log b}<\frac{(n_1(j_1+1))\log q}{(i_1+n_1(j_1+1))\log b}.
$$
After rearrangement and exponentiation, we obtain
$$
\frac{q^{n_1j_1}}{b^{(i_1+n_1j_1)t}}\leq  \frac{1}{i_1+n_1j_1}, \frac{1}{i_1+n_1(j_1+1)}<\frac{q^{n_1(j_1+1)}}{b^{(i_1+m_1(j_1+1))t}}.
$$
For any $i_2\geq 1$, we have the monotonicity estimate
$$
\frac{n_1j_1\log q}{(i_1+m_1j_1+i_2)\log b}<\frac{n_1j_1\log q}{(i_1+m_1j_1)\log b}\leq t-\frac{\log (i_1+n_1j_1)}{(i_1+m_1j_1)\log b}<t-\frac{\log (i_1+n_1j_1+i_2)}{(i_1+m_1j_1+i_2)\log b},
$$
which guarantees the feasibility of the next-step construction. We then find $x_2>0$ satisfying
$$
\frac{(n_1j_1+n_2x_2)\log q}{(i_1+m_1j_1+i_2+m_2x_2)\log b}=t-\frac{\log (i_1+n_1j_1+i_2+n_2x_2)}{(i_1+m_1j_1+i_2+m_2x_2)\log b}.
$$
We can also observe that $x_2$ increases with $i_2$ and tends to $+\infty$. Taking $j_2=\lfloor x_2\rfloor$, we derive the two-step estimate
$$
\frac{q^{n_1j_1+n_2j_2}}{b^{(i_1+i_2+m_1j_1+m_2j_2)t}}\leq \frac{1}{i_1+i_2+n_1j_1+n_2j_2}, \frac{1}{i_1+i_2+n_1j_1+n_2(j_2+1)}<\frac{q^{n_1j_1+n_2(j_2+1)}}{b^{(i_1+i_2+m_1j_1+m_2(j_2+1))t}}.
$$
By induction, for all $k\geq 1$, we can construct finite sequences $\{i_l\}_{l=1}^k,\{j_l\}_{l=1}^k$ such that
\begin{align*}
&\frac{(n_1j_1+\cdots +n_kj_k)\log q}{(i_1+\cdots +i_k+m_1j_1+\cdots +m_kj_k)\log b}\leq t-\frac{\log (i_1+\cdots +i_k+n_1j_1+\cdots +n_kj_k)}{(i_1+\cdots +i_k+m_1j_1+\cdots +m_kj_k)\log b},\\
&t-\frac{\log (i_1+\cdots +i_k+n_1j_1+\cdots +n_k(j_k+1))}{(i_1+\cdots +i_k+m_1j_1+\cdots +m_k(j_k+1))\log b}<\frac{(n_1j_1+\cdots +n_k(j_k+1))\log q}{(i_1+\cdots +i_k+m_1j_1+\cdots +m_k(j_k+1))\log b}.
\end{align*}
This yields exactly the required double inequality for each $k$. The inductive condition persists for all subsequent iterations. Passing to the infinite limit, we obtain the desired infinite sequences. For $t=\alpha'\frac{\log (q^{m_0}-1)}{\log b^{m_0}}$ with $\alpha'\in (0,1)\cap \Bbb Q^c$, through a similar argument, we arrive at the corresponding conclusion, finishing the proof of (ii).
\par (iii) If $t=\alpha \frac{\log q}{\log b}$ with  $\alpha\in (0,1)\cap \Bbb Q^c$, then there exists $k_0$ such that for any $i_1\geq k_0, 0=\frac{\log 1}{i_1\log b}<t+\frac{\log i_1}{i_1\log b}<\frac{\log q}{\log b}$. Again, $\frac{n_1x\log q}{(i_1+m_1x)\log b}$ is strictly increasing in $x>1$ with limit greater than $t$, and $\frac{\log (i_1+n_1x)}{(i_1+m_1x)\log b}$ is monotone in $x$. Thus there exists $x_1>0$ such that
$$
\frac{n_1x_1\log q}{(i_1+n_1x_1)\log b}=t+\frac{\log (i_1+n_1x_1)}{(i_1+m_1x_1)\log b},
$$
or equivalently, $(n_1\log q-m_1t\log b)x_1=i_1t\log b+\log (i_1+m_1x_1)$. The solution $x_1$ increases with $i_1$ and tends to infinity. Set $j_1=\lfloor x_1\rfloor$, then
$$
\frac{n_1j_1\log q}{(i_1+m_1j_1)\log b}\leq t+\frac{\log (i_1+n_1j_1)}{(i_1+m_1j_1)\log b}, \quad t+\frac{\log (i_1+n_1(j_1+1))}{(i_1+m_1j_1+1)\log b}<\frac{n_1(j_1+1)\log q}{(i_1+m_1j_1+1)\log b}.
$$
Rearranging gives the first-step chain inequality
$$
\frac{q^{n_1j_1}}{b^{(i_1+m_1j_1)t}}\leq  i_1+n_1j_1<i_1+n_1(j_1+1)<\frac{q^{n_1(j_1+1)}}{b^{(i_1+m_1(j_1+1))t}}.
$$
We verify the inductive condition: for any $i_2\geq 1$,
$$
\frac{n_1j_1\log q}{(i_1+m_1j_1+i_2)\log b}\leq t+\frac{\log (i_1+n_1j_1+i_2)}{(i_1+m_1j_1+i_2)\log b}.
$$
This follows from the gap estimate
\begin{align*}
\Big(\frac{n_1j_1}{i_1+m_1j_1}-\frac{n_1j_1}{i_1+m_1j_1+i_2}\Big)\frac{\log q}{\log b}&>\frac{\log (i_1+n_1j_1)}{(i_1+m_1j_1)\log b}-\frac{\log (i_1+n_1j_1)}{(i_1+m_1j_1+i_2)\log b}\\
&>\frac{\log (i_1+n_1j_1)}{(i_1+m_1j_1)\log b}-\frac{\log (i_1+n_1j_1+i_2)}{(i_1+m_1j_1+i_2)\log b},
\end{align*}
combined with the former inequality for $i_1, j_1$. Repeating the monotonicity argument, we find $x_2>0$ and set $j_2=\lfloor x_2\rfloor$, which yields the two-step chain inequality. By induction on $k$, for every positive integer $k$, we can construct sequences $\{i_l\}_{l=1}^k,\{j_l\}_{l=1}^k$ such that
\begin{align*}
&\frac{(n_1j_1+\cdots +n_kj_k)\log q}{(i_1+\cdots +i_k+m_1j_1+\cdots +m_kj_k)\log b}\leq t+\frac{\log (i_1+\cdots +i_k+n_1j_1+\cdots +n_kj_k)}{(i_1+\cdots +i_k+m_1j_1+\cdots +m_kj_k)\log b},\\
&t+\frac{\log (i_1+\cdots +i_k+n_1j_1+\cdots +n_k(j_k+1))}{(i_1+\cdots +i_k+m_1j_1+\cdots +m_k(j_k+1))\log b}<\frac{(n_1j_1+\cdots +n_k(j_k+1))\log q}{(i_1+\cdots +i_k+m_1j_1+\cdots +m_k(j_k+1))\log b}.
\end{align*}
The inductive construction can be extended to all $k\in\mathbb{N}$, and the limiting infinite sequences satisfy all required properties. If $t=\alpha'\frac{\log (q^{m_0}-1)}{\log b^{m_0}}$ with $\alpha'\in (0,1)\cap \Bbb Q^c$, we can obtain the associated conclusion by a similar discussion. This completes the proof of (iii) and the whole lemma.
\end{proof}

\begin{remark}\label{rem2.7}
From the proof of Lemma \ref{lem2.6}, it is easy to see that for any $t\in(0, \frac{\log q}{\log b})$ and $s\in [0,\infty]$ and the associated sequences $\{n_k\},\{m_k\},\{i_k\},\{j_k\}$, we have
\begin{align}\label{2.1}
&\lim_{k\to \infty}\frac{(n_1j_1+\cdots +n_kj_k)\log q}{(i_1+\cdots +i_{k}+m_1j_1+\cdots +m_kj_k)\log b}=t,
\quad \lim_{k\to \infty}\frac{q^{n_1j_1+\cdots +n_kj_k}}{b^{(i_1+\cdots +i_{k}+m_1j_1+\cdots +m_kj_k)t}}=s,\nonumber \\
&\lim_{k\to \infty}\frac{(n_1j_1+\cdots +n_kj_k)\log (q^{m_0}-1)}{(i_1+\cdots +i_{k}+m_1j_1+\cdots +m_kj_k)\log b^{m_0}}=t,
\quad \lim_{k\to \infty}\frac{(q^{m_0}-1)^{n_1j_1+\cdots +n_kj_k}}{b^{(i_1+\cdots +i_{k}+m_1j_1+\cdots +m_kj_k)m_0t}}=s,
\end{align}
Moreover, we can take $i_k$ arbitrarily large and $\lim_{k\to \infty}i_k=\lim_{k\to \infty}j_k=+\infty$.
\end{remark}

For convenience, we denote $\mathcal{I}_k=\sum_{l=1}^ki_l,\mathcal{J}_k=\sum_{l=1}^kn_lj_l, \mathcal{J}'_k=\sum_{l=1}^km_lj_l$, then \eqref{2.1} becomes
\begin{align}\label{2.2}
&\lim_{k\to \infty}\frac{\mathcal{J}_k\log q}{(\mathcal{I}_k+\mathcal{J}'_k)\log b}=t, \quad \lim_{k\to \infty}\frac{q^{\mathcal{J}_k}}{b^{(\mathcal{I}_k+\mathcal{J}'_k)t}}=s,\nonumber \\
&\lim_{k\to \infty}\frac{\mathcal{J}_k\log (q^{m_0}-1)}{(\mathcal{I}_k+\mathcal{J}'_k)m_0\log b}=t, \quad \lim_{k\to \infty}\frac{(q^{m_0}-1)^{\mathcal{J}_k}}{b^{(\mathcal{I}_k+\mathcal{J}'_k)m_0t}}=s.
\end{align}

\section{\bf Proofs of Theorems 1.3 and 1.4 \label{sect.3}}

In this section, we will prove Theorem \ref{th1.3} and Theorem \ref{th1.4}. We firstly construct the spectrum of $\mu$. Lemma \ref{lem2.2} guarantees the existence of $\epsilon_1\in (0,\frac12)$ and an integer $N_{1}\ge 1$ such that for any $\xi\in [0,1]$, there exist integers $\ell_1^{(\xi)},\ell_2^{(\xi)}\in \{0,1,\dots,b^{N_1}-1\}$ with $\ell_1^{(\xi)}<\ell_2^{(\xi)}$
satisfying
\[
\bigl|\widehat{\mu}\bigl(\xi+\ell_i^{(\xi)}\bigr)\bigr|\ge 2\epsilon_1.
\]
By the uniform continuity of $\widehat{\mu}(\xi)$, there exists $\delta_1\in (0,1)$ such that for any $|x-\xi|<\delta_1$,
\[
\bigl|\widehat{\mu}\bigl(x+\ell_i^{(\xi)}\bigr)-\widehat{\mu}\bigl(\xi+\ell_i^{(\xi)}\bigr)\bigr|<\epsilon_1,
\]
which implies that
\begin{align}\label{3.1}
\bigl|\widehat{\mu}\bigl(x+\ell_i^{(\xi)}\bigr)\bigr
|
\ge
\bigl|\widehat{\mu}\bigl(\xi+\ell_i^{(\xi)}\bigr)\bigr
|
-
\bigl|\widehat{\mu}\bigl(x+\ell_i^{(\xi)}\bigr)-\widehat{\mu}\bigl(\xi+\ell_i^{(\xi)}\bigr)\bigr
|
\ge \epsilon_1.
\end{align}
We denote a particular non-zero
$\ell_j^{(0)}$ as $\tilde{l}$, satisfying
\[
\bigl|\widehat{\mu}\bigl(\xi+ \tilde{l}\bigr)\bigr|\ge \epsilon_1\quad \text{for any } |\xi|<\delta_1.
\]

\par Let $\Xi=\{0,1,\cdots, q-1\}, L=\{0=l_0,l_1,\cdots, l_{q-1}\}, \Theta=\{0\}$ and denote
$$
\Xi^k:=\underbrace{\Xi\times \cdots \times \Xi}_{k}, \;\Theta^n \Xi^k=\underbrace{\Theta \times \cdots \times \Theta}_{n}\times \underbrace{\Xi\times \cdots \times \Xi}_{k}\subset \Xi^{n+k}.
$$
Define the map $T: \Xi \to L$ as $T(i)=l_i$. For $n \ge 1$, denote
\begin{align*}
\mathcal{L}_n := L + bL + \dots + b^{n-1}L,
\end{align*}
with the natural mapping $T_n: \Xi^n \to \mathcal{L}_n$ given by
\begin{align*}
T_n(i_1i_2\cdots i_n) := \sum_{j=1}^{n} T(i_j) b^{j-1}.
\end{align*}
For any  $t=\alpha \frac{\log q}{\log b}$ with  $\alpha\in (0,1)\cap \Bbb Q^c$ and $s\in [0,\infty]$, let $\{n_k\},\{m_k\},\{i_k\},\{j_k\}$ be the corresponding sequences given by Lemma \ref{lem2.6}, define $\Omega_k=\Theta^{i_k}\Xi^{(m_k-n_k)j_k}\Theta^{n_kj_k}$ and
\[
\phi_{k}(l,j):=
\begin{cases}
0 & l\in T_{i_k+m_kj_k}(\Omega_k); \\
\ell_j^{(b^{-(i_k+m_kj_k)}l)}& l\in\mathcal{L}_{i_k+m_kj_k}\setminus T_{i_k+m_kj_k}(\Omega_k),
\end{cases} \quad j=1,2
\]
and
\begin{equation}\label{3.2}
\psi_{k}(l,j):=l+ b^{i_k+m_kj_k}\phi_{k}(l,j).
\end{equation}
It is easy to see that $\ell_j^{(b^{-(i_k+m_kj_k)}l)}<b^{N_1}$. Note $\hat{\mu}(0)=1$. Then there exist $\epsilon_2, \delta_2$ such that $|\hat{\mu}(y)|\ge \epsilon_2$ for $|y|\le \delta_2$.  Since $\lim_{k\to\infty}j_k=+\infty$, then for $k$ large enough and $l\in T_{i_k+m_kj_k}(\Omega_k)$, we have
$$
b^{-(i_k+m_kj_k)}l\leq b^{-(i_k+m_kj_k)}(1+b+\cdots +b^{i_k+(m_k-n_k)j_k-1})\max_{1\leq j\leq q-1}l_j \leq b^{-n_kj_k}<\frac{\delta_2}{2}.
$$
This combine with \eqref{3.1}-\eqref{3.2} imply
\begin{equation}\label{3.3}
\bigl|\widehat{\mu}\bigl(y + b^{-(i_k+m_kj_k)}\psi_{k}(l,j)\bigr)\bigr|
\ge \epsilon:=\min\{\epsilon_2,\epsilon_2\}
\quad\text{when } |y|<\delta=\min\Big\{\frac{\delta_1}{2},\frac{\delta_2}{2}\Big\}.
\end{equation}

We now construct the spectrum $\Lambda^{(t,s)}(I)$ inductively for each $\omega= \omega_1\omega_2\cdots \in \{1,2\}^\infty$. We begin with the initial sets at the first iterative step. Define
\[
\widetilde{\Lambda}^{(t,s)}_{1}(\omega):= \bigl\{\psi_{1}(l,\omega_1) : l \in T_{i_1+m_1j_1}(\Omega_1)\bigr\}
\]
and
\[
\Gamma_{1}(I) := \bigl\{\psi_{1}(l,\omega_1) : l \in \mathcal{L}_{i_1+m_1j_1}\setminus T_{i_1+m_1j_1}(\Omega_1)\bigr\}
= \bigl\{\lambda_1^{(1)},\lambda_2^{(1)},\dots,\lambda_{\sigma_1}^{(1)}\bigr\},\quad
\]
where the mapping $\psi_{1}(l,i_1)$ is defined in \eqref{3.2}. Let $N\geq N_1$ be a positive integer sufficiently large such that $b^{-2N}<\delta'$. We further define the augmented component of the spectrum at the first level
\[
\overline{\Lambda}^{(t,s)}_{1}(\omega) :=
\Bigl\{\lambda_j^{(1)} + b^{\mathcal{I}_1+\mathcal{J}'_1+(N+2)(j+1)}\tilde{l} : 1 \le j \le \sigma_1\Bigr\},
\]
Next, since $i_2$ can be taken arbitrarily large, we choose $i_2+m_2j_2$ sufficiently large to satisfy the separation condition
\[
b^{-(i_2+m_2j_2)+(N+2)(\sigma_1+2)} < \delta.
\]
We then define the second-stage iterative sets. Let
\[
\widetilde{\Lambda}^{(t,s)}_{2}(\omega):=
\widetilde{\Lambda}^{(t,s)}_{1}(\omega)
+ b^{\mathcal{I}_1+\mathcal{J}'_1}\bigl\{\psi_{2}(l,\omega_2): l\in T_{i_2+m_2j_2}(\Omega_2)\bigr\}
\]
and decompose the complementary set as
\[
\Gamma_{2}(\omega):=
\Gamma^{(1)}_{2}(\omega)\cup \Gamma^{(2)}_{2}(\omega)
=\bigl\{\lambda^{(2)}_1,\dots,\lambda^{(2)}_{\sigma_2}\bigr\},
\]
where
\[
\Gamma^{(1)}_{2}(\omega)
=\widetilde{\Lambda}^{(t,s)}_{1}(\omega)
+ b^{\mathcal{I}_1+\mathcal{J}'_1}\bigl\{\psi_{2}(l,\omega_2): l\in  \mathcal{L}_{i_2+m_2j_2}\setminus T_{i_2+m_2j_2}(\Omega_{2})\bigr\}
\]
and
\[
\Gamma^{(2)}_{2}(\omega)
=\overline{\Lambda}^{(t,s)}_{1}(\omega)
+ b^{\mathcal{I}_1+\mathcal{J}'_1}\bigl\{\psi_{2}(l,\omega_2): l\in \mathcal{L}_{i_2+m_2j_2}\setminus\{0\}\bigr\}.
\]
Accordingly, the updated augmented spectrum at the second level is given by
\[
\overline{\Lambda}^{(t,s)}_{2}(I):=
\overline{\Lambda}^{(t,s)}_{1}(I)
\cup \Bigl\{\lambda^{(2)}_j + b^{\mathcal{I}_2+\mathcal{J}'_2+(2+N)(j+1)}\tilde{l}
: 1\le j\le \sigma_2\Bigr\}.
\]
We now proceed to the general inductive step. For each $k\geq 3$, we choose $i_k+m_2j_k$ sufficiently large such that
\begin{equation}\label{3.4}
b^{-(i_k+m_kj_k)+(N+2)(\sigma_{k-1}+2)} <\delta.
\end{equation}
We recursively define the core iterative set at stage $k$ by
\begin{equation}\label{eq3.5}
\widetilde{\Lambda}^{(t,s)}_{k}(\omega):=
\widetilde{\Lambda}^{(t,s)}_{k-1}(\omega)
+ b^{\mathcal{I}_{k-1}+\mathcal{J}'_{k-1}}\bigl\{\psi_{k}(l,\omega_k): l\in T_{i_k+m_kj_k}(\Omega_{k})\bigr\}.
\end{equation}
Similarly, we decompose the complementary set
\[
\Gamma_{k}(\omega):=
\Gamma^{(1)}_{k}(\omega)\cup \Gamma^{(2)}_{k}(\omega)
=\bigl\{\lambda^{(k)}_1,\dots,\lambda^{(k)}_{\sigma_k}\bigr\},
\]
with
\[
\Gamma^{(1)}_{k}(\omega)
=\widetilde{\Lambda}^{(t,s)}_{k-1}(\omega)
+ b^{\mathcal{I}_{k-1}+\mathcal{J}'_{k-1}}\bigl\{\psi_{k}(l,\omega_k): l\in \mathcal{L}_{i_k+m_kj_k}\setminus T_{i_k+m_kj_k}(\Omega_{k})\bigr\}
\]
and
\[
\Gamma^{(2)}_{k}(\omega)
=\overline{\Lambda}^{(t,s)}_{k-1}(\omega)
+ b^{\mathcal{I}_{k-1}+\mathcal{J}'_{k-1}}\bigl\{\psi_{k}(l,\omega_k): l\in \mathcal{L}_{i_k+m_kj_k}\setminus\{0\}\bigr\}.
\]
The augmented spectrum at level $k$ is then updated as
\begin{equation}\label{3.6}
\overline{\Lambda}^{(t,s)}_{k}(\omega):=
\overline{\Lambda}^{(t,s)}_{k-1}(\omega)
\cup \Bigl\{\lambda^{(k)}_j + b^{\mathcal{I}_{k}+\mathcal{J}'_{k}+(j+1)(2+N)}\tilde{l}
: 1\le j\le \sigma_k\Bigr\}.
\end{equation}
In this construction, $\widetilde{\Lambda}^{(t,s)}_{k}(\omega)$ is designed to precisely realize the prescribed Beurling dimension and Beurling density, while the auxiliary set $\overline{\Lambda}^{(t,s)}_{k}(\omega)$ compensates the core set to form a complete valid spectrum. By construction, the sequences of sets are strictly increasing
\begin{equation}\label{3.7}
\overline{\Lambda}^{(t,s)}_{k-1}(\omega) \subset \overline{\Lambda}^{(t,s)}_{k}(\omega),\quad
\widetilde{\Lambda}^{(t,s)}_{k-1}(\omega) \subset \widetilde{\Lambda}^{(t,s)}_{k}(\omega),
\quad \forall\ k\ge 2.
\end{equation}
Finally, we take the increasing limits of these iterative families to obtain the final spectral components and the full spectrum
\begin{equation}\label{3.8}
\widetilde{\Lambda}^{(t,s)}(\omega):=\bigcup_{k=1}^{\infty}\widetilde{\Lambda}^{(t,s)}_{k}(\omega),\quad
\overline{\Lambda}^{(t,s)}(\omega):=\bigcup_{k=1}^{\infty}\overline{\Lambda}^{(t,s)}_{k}(\omega),\quad
\Lambda^{(t,s)}(\omega):=\widetilde{\Lambda}^{(t,s)}(\omega)\cup\overline{\Lambda}^{(t,s)}(\omega).
\end{equation}

The above iterative construction yields crucial quantitative estimates for the constructed spectral sets, which we summarize in the following lemma.

\begin{lemma}\label{lem3.1}
As the above notations, we have
\[
0\le \lambda < 2b^{\mathcal{I}_{k}+\mathcal{J}'_{k}+N},\quad \forall\ \lambda\in \Gamma_{k}(\omega)\cup \widetilde{\Lambda}^{(t,s)}_{k}(\omega),
\]
and
\[
b^{\mathcal{I}_{k}+\mathcal{J}'_{k}+2(N+2)}\le \lambda < b^{\mathcal{I}_{k}+\mathcal{J}'_{k}+(N+2)(\sigma_k+2)},\quad
\forall\ \lambda\in \overline{\Lambda}^{(t,s)}_{k}(\omega)\setminus \overline{\Lambda}^{(t,s)}_{k-1}(\omega).
\]
Moreover, the set $\overline{\Lambda}^{(t,s)}(\omega)$ is $b$-lacunary. We can further see that for $k$ large enough,
\[
0\le \lambda < b^{\mathcal{I}_{k}+\mathcal{J}'_{k}},\quad \forall\ \lambda\in \widetilde{\Lambda}^{(t,s)}_{k}(\omega).
\]
\end{lemma}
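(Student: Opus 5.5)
Since every set involved is built by the recursion \eqref{eq3.5}--\eqref{3.6}, I will prove the bounds by induction on $k$, tracking the digit expansion of each element in base $b$. The basic estimate is elementary: since $L\subset\{0,1,\dots,b-1\}$, every $l\in\mathcal L_n$ satisfies $0\le l\le (b-1)(1+b+\cdots+b^{n-1})<b^n$. Every $\phi_k(l,j)$ is either $0$ or some $\ell_j^{(\cdot)}<b^{N_1}\le b^N$. Hence
\[
0\le\psi_k(l,j)<b^{i_k+m_kj_k}+b^{i_k+m_kj_k}(b^{N}-1)=b^{i_k+m_kj_k+N}.
\]

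For the first claim at $k=1$, elements of $\widetilde\Lambda_1$ and $\Gamma_1$ are of the form $\psi_1(l,\omega_1)$, so they lie below $b^{\mathcal I_1+\mathcal J'_1+N}$. For the inductive step, an element of $\widetilde\Lambda_k$ or $\Gamma_k^{(1)}$ has the form $\lambda'+b^{\mathcal I_{k-1}+\mathcal J'_{k-1}}\psi_k$ with $\lambda'\in\widetilde\Lambda_{k-1}$. By induction $\lambda'<2b^{\mathcal I_{k-1}+\mathcal J'_{k-1}+N}$, and this is at most $b^{\mathcal I_k+\mathcal J'_k}$ once $i_k\ge N+1$ (allowed by Remark \ref{rem2.7}). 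So the sum is less than $b^{\mathcal I_k+\mathcal J'_k}+b^{\mathcal I_k+\mathcal J'_k+N}-b^{\mathcal I_{k-1}+\mathcal J'_{k-1}}\le 2b^{\mathcal I_k+\mathcal J'_k+N}$.

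For $\Gamma_k^{(2)}$, the base point $\lambda'$ lies in $\overline\Lambda_{k-1}$. By the second claim at level $k-1$, $\lambda'<b^{\mathcal I_{k-1}+\mathcal J'_{k-1}+(N+2)(\sigma_{k-1}+2)}$, and by \eqref{3.4} this is at most $\delta\, b^{\mathcal I_k+\mathcal J'_k}<b^{\mathcal I_k+\mathcal J'_k}$. The same bound therefore follows, and the two claims have to be proved jointly in one induction.

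For the second claim, a new element of $\overline\Lambda_k$ is $\lambda_j^{(k)}+b^{\mathcal I_k+\mathcal J'_k+(N+2)(j+1)}\tilde l$ with $1\le j\le\sigma_k$ and $1\le\tilde l<b^{N}$. The lower bound is immediate, since $\tilde l\ge1$ and $j+1\ge2$. For the upper bound, $\lambda_j^{(k)}<2b^{\mathcal I_k+\mathcal J'_k+N}$ is negligible, and the second term is at most $b^{\mathcal I_k+\mathcal J'_k+(N+2)(j+1)+N}$. Combining the two, the total is less than $b^{\mathcal I_k+\mathcal J'_k+(N+2)(j+2)}\le b^{\mathcal I_k+\mathcal J'_k+(N+2)(\sigma_k+2)}$.

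For lacunarity, I compare consecutive elements within one level $k$, namely indices $j<j'$. The element with index $j'$ is at least $b^{\mathcal I_k+\mathcal J'_k+(N+2)(j'+1)}$, while the one with index $j$ is below $b^{\mathcal I_k+\mathcal J'_k+(N+2)(j+1)+N+1}$, so their ratio is at least $b$. Across levels, every element of level $k-1$ is below $b^{\mathcal I_{k-1}+\mathcal J'_{k-1}+(N+2)(\sigma_{k-1}+2)}$, which by \eqref{3.4} is less than $\delta\, b^{\mathcal I_k+\mathcal J'_k}$. Every element of level $k$ is at least $b^{\mathcal I_k+\mathcal J'_k+2(N+2)}$, so the ratio is again at least $b$. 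This gives the $b$-lacunary property.

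The final claim is where the structure of $\Omega_k$ is needed. For $l\in T(\Omega_k)$ the last $n_kj_k$ digits vanish and $\phi_k=0$, so $\psi_k(l,\cdot)=l<b^{i_k+(m_k-n_k)j_k}$. Consequently each element of $\widetilde\Lambda_k$ has top digits beyond position $\mathcal I_{k-1}+\mathcal J'_{k-1}+i_k+(m_k-n_k)j_k$ all zero. The lower-level part contributes less than $b^{\mathcal I_{k-1}+\mathcal J'_{k-1}}$ for $k$ large, by induction. The total is therefore at most $b^{\mathcal I_k+\mathcal J'_k-n_kj_k}\cdot b<b^{\mathcal I_k+\mathcal J'_k}$ once $n_kj_k\ge1$.

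I expect the main obstacle to be bookkeeping rather than any single estimate: the induction must carry both bounds simultaneously, and it has to use \eqref{3.4} together with the freedom to take $i_k$ large so that all lower-level contributions are absorbed.
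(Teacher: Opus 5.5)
Your proposal is correct and follows essentially the same route as the paper: a joint induction on $k$ using $0\le\psi_k<b^{i_k+m_kj_k+N}$, the separation condition \eqref{3.4} to absorb lower-level contributions, a same-level versus different-level comparison for $b$-lacunarity, and the digit structure of $\Omega_k$ for the last bound. Your separate treatment of $\Gamma^{(2)}_k(\omega)$ via \eqref{3.4} is in fact more careful than the paper's inductive step, which bounds every $\lambda'\in\widetilde{\Lambda}^{(t,s)}_{k-1}(\omega)\cup\overline{\Lambda}^{(t,s)}_{k-1}(\omega)$ by $2b^{\mathcal{I}_{k-1}+\mathcal{J}'_{k-1}+N}$, a bound that fails for $\lambda'\in\overline{\Lambda}^{(t,s)}_{k-1}(\omega)$. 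Your digit argument for the final claim also works for every $k\ge 1$, since it gives $\lambda<b^{\mathcal{I}_k+\mathcal{J}'_k-n_kj_k}$ starting from $\widetilde{\Lambda}^{(t,s)}_1(\omega)=T_{i_1+m_1j_1}(\Omega_1)$.
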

\begin{proof}
We proceed with mathematical induction to verify the above estimates. For the initial case $k=1$, recall that $\phi_{1}(l,j)< b^{N_1}\le b^N$ by definition. A direct verification implies
\[
0\le \lambda < b^{i_1+m_1j_1}+ b^{i_1+m_1j_1}\cdot b^N< 2b^{\mathcal{I}_1+\mathcal{J}'_1+N},\quad \forall\ \lambda\in \Gamma_{1}(\omega)\cup \widetilde{\Lambda}^{(t,s)}_{1}(\omega).
\]
For elements $\lambda\in \overline{\Lambda}^{(t,s)}_{1}(\omega)$, since $1\le \tilde{l}<b^N$, we derive the upper and lower bounds
\[
b^{\mathcal{I}_1+\mathcal{J}'_1+2(N+2)}\le \lambda < 2b^{\mathcal{I}_1+\mathcal{J}'_1+N}+b^{\mathcal{I}_1+\mathcal{J}'_1+(2+N)(\sigma_1+1)+N}
< b^{\mathcal{I}_1+\mathcal{J}'_1+(N+2)(\sigma_1+2)}.
\]
Assume that all conclusions of the lemma hold for all integers up to $k-1$. For the inductive step, consider arbitrary $\lambda\in \Gamma_{k}(\omega)\cup \widetilde{\Lambda}^{(t,s)}_{k}(\omega)$. By the iterative construction, such $\lambda$ admits the decomposition
\[
\lambda=\lambda'+b^{\mathcal{I}_{k-1}+\mathcal{J}'_{k-1}}\psi_{k}(l,\omega_k),
\]
where $\lambda'\in \widetilde{\Lambda}^{(t,s)}_{k-1}(\omega)\cup \overline{\Lambda}^{(t,s)}_{k-1}(\omega)$ and $l\in \mathcal{L}_{i_k+m_kj_k}$. Combining the induction hypothesis and the uniform bound $0\le \psi_{k}(l,\omega_k)< b^{i_k+m_kj_k}+b^{i_k+m_kj_k+N}$, we obtain
\[
0\le \lambda < 2b^{\mathcal{I}_{k-1}+\mathcal{J}'_{k-1}+N}+b^{\mathcal{I}_{k-1}+\mathcal{J}'_{k-1}}(b^{i_k+m_kj_k}+b^{i_k+m_kj_k+N})<2b^{\mathcal{I}_{k}+\mathcal{J}'_{k}+N}.
\]
Further, for $k$ large enough and any $\lambda\in \widetilde{\Lambda}^{(t,s)}_{k}(\omega)$, we have
$$
0\le \lambda < 2b^{\mathcal{I}_{k-1}+\mathcal{J}'_{k-1}+N}+b^{\mathcal{I}_{k-1}+\mathcal{J}'_{k-1}}\cdot b^{i_k+(m_k-n_k)j_k}<b^{\mathcal{I}_{k}+\mathcal{J}'_{k}},
$$
as $\lim_{k\to\infty}j_k=\lim_{k\to\infty}m_k=+\infty$.  For the newly added elements at the $k$-th stage, i.e., $\lambda\in \overline{\Lambda}^{(t,s)}_{k}(\omega)\setminus \overline{\Lambda}^{(t,s)}_{k-1}(\omega)$, we similarly establish
\[
b^{\mathcal{I}_{k}+\mathcal{J}'_{k}+2(N+2)}\le \lambda < 2b^{\mathcal{I}_{k}+\mathcal{J}'_{k}+N}+b^{\mathcal{I}_{k}+\mathcal{J}'_{k}+(N+2)(\sigma_k+1)+N}
< b^{\mathcal{I}_{k}+\mathcal{J}'_{k}+(N+2)(\sigma_k+2)},
\]
which completes the induction for the estimation.

Next, we prove the $b$-lacunary property of $\overline{\Lambda}^{(t,s)}(\omega)$. Take any two elements $\lambda_1,\lambda_2\in \overline{\Lambda}^{(t,s)}(\omega)$ satisfying $\lambda_1>\lambda_2$. We discuss two exhaustive cases.

\textit{Case 1:}  Suppose  $\lambda_1\in \overline{\Lambda}^{(t,s)}_{k}(\omega)\setminus \overline{\Lambda}^{(t,s)}_{k-1}(\omega)$ and $\lambda_2\in \overline{\Lambda}^{(t,s)}_{k'}(\omega)$ for some $k'<k$, using the conclusion from the first part of this lemma, then
\[
\lambda_1\ge b^{\mathcal{I}_{k}+\mathcal{J}'_{k}+2(N+2)}>b^{\mathcal{I}_{k'+1}+\mathcal{J}'_{k'+1}+1}> b^{\mathcal{I}_{k'}+\mathcal{J}'_{k'}+(N+2)(\sigma_k+2)+1}\ge b\lambda_2.
\]

\textit{Case 2:} Suppose both $\lambda_1,\lambda_2\in \overline{\Lambda}^{(t,s)}_{k}(\omega)\setminus \overline{\Lambda}^{(t,s)}_{k-1}(\omega)$. By construction, there exist indices $r_1,r_2$ and corresponding elements $\lambda^{(k)}_{r_1},\lambda^{(k)}_{r_2}\in \Gamma_{k}(\omega)$ such that
\[
\lambda_i=\lambda^{(k)}_{r_i}+b^{\mathcal{I}_{k}+\mathcal{J}'_{k}+(2+N)(r_i+1)}\tilde{l}.
\]
The assumption $\lambda_1>\lambda_2$ directly implies $r_1>r_2$. We then compute the difference $\lambda_1-b\lambda_2$ as follows
\[
\begin{aligned}
\lambda_1-b\lambda_2
&\ge \bigl(b^{\mathcal{I}_{k}+\mathcal{J}'_{k}+(2+N)(r_1+1)}\tilde{l}-b^{\mathcal{I}_{k}+\mathcal{J}'_{k}+(2+N)(r_2+1)+1}\tilde{l}\bigr)-\bigl|\lambda^{(k)}_{r_1}-b\lambda^{(k)}_{r_2}\bigr| \\
&\ge \bigl(b^{(2+N)(r_1-r_2)}-b\bigr)b^{\mathcal{I}_{k}+\mathcal{J}'_{k}+(2+N)(r_2+1)}\tilde{l}-2(b+1)b^{\mathcal{I}_{k}+\mathcal{J}'_{k}+N} \\
&>0.
\end{aligned}
\]
Combining the above two cases, we conclude that $\lambda_1 \ge b\lambda_2$ holds for all pairs $\lambda_1>\lambda_2$ in $\overline{\Lambda}^{(t,s)}(\omega)$. Therefore, the set $\overline{\Lambda}^{(t,s)}(\omega)$ is $b$-lacunary. This finishes the proof of the lemma.
\end{proof}

\begin{lemma}\label{lem3.2}
Let $\Lambda^{(t,s)}(\omega)$ be defined as in \eqref{3.8}. Then $\Lambda^{(t,s)}(\omega)$ is a spectrum of $\mu$.
\end{lemma}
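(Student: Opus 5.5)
We will verify Lemma \ref{lem2.1} with $\alpha_k:=\mathcal{I}_k+\mathcal{J}'_k+(N+2)(\sigma_k+2)$. For this choice the candidate spectra of $\mu_{\alpha_k}$ are
\[
\Lambda_{\alpha_k}:=\widetilde{\Lambda}^{(t,s)}_{k}(\omega)\cup\overline{\Lambda}^{(t,s)}_{k}(\omega),
\]
possibly enlarged by the unused intermediate digit blocks. The argument has three steps.

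\textbf{Step 1: the finite sets are spectra of $\mu_{\alpha_k}$.} Because $(b,D,L)$ is a Hadamard triple, $\mathcal{L}_n$ is a spectrum of $\mu_n$. The same holds for any set $\{l+b^n a_l: l\in\mathcal{L}_n\}$ with integers $a_l$, since a spectrum of $\mu_n$ is only determined modulo $b^n\Bbb Z$. Nested uses of this give the claim at stage $k$. The elements $\psi_k(l,\omega_k)$ are congruent to $l$ modulo $b^{i_k+m_kj_k}$. Each level-$k$ ``core'' element $\lambda'+b^{\mathcal{I}_{k-1}+\mathcal{J}'_{k-1}}\psi_k(l,\omega_k)$ therefore represents a distinct residue modulo $b^{\mathcal{I}_k+\mathcal{J}'_k}$ in $\mathcal{L}_{\mathcal{I}_k+\mathcal{J}'_k}$. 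Here $\lambda'$ ranges over $\widetilde{\Lambda}_{k-1}\cup\overline{\Lambda}_{k-1}$, and the $l=0$ term taken over $\overline{\Lambda}_{k-1}$ simply reproduces $\overline{\Lambda}_{k-1}$.

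Between scales $b^{\mathcal{I}_k+\mathcal{J}'_k}$ and $b^{\alpha_k}$, each $\lambda\in\Gamma_k$ is frozen: the element $\lambda^{(k)}_j+b^{\mathcal{I}_k+\mathcal{J}'_k+(j+1)(N+2)}\tilde l$ is its representative. The missing digits are $0$ except at the single block where $\tilde l$ sits. Since $\tilde l\equiv$ an element of $\mathcal{L}_{N_1}$ modulo $b^{N_1}$ (Lemma \ref{lem2.2} with $\xi=0$ is applied via the Hadamard structure), the congruence bookkeeping again gives a complete set of representatives. I would formalise this as follows: $\Lambda_{\alpha_k}$ is in bijection with $\mathcal{L}_{\alpha_k}$ modulo $b^{\alpha_k}$ after discarding the Fourier-zero-free directions.

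Honestly, the delicate point is that the augmented sets do not fill all of $\mathcal{L}_{\alpha_k}$. I would therefore instead apply Lemma \ref{lem2.1} with the sequence $\alpha_k=\mathcal{I}_k+\mathcal{J}'_k$ and treat the frozen tails via mutual orthogonality plus the lower bound in Step 2. Completeness is then inherited from the limiting argument in \cite{AHH}.

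\textbf{Step 2: orthogonality.} For $\lambda\ne\lambda'$ in $\Lambda^{(t,s)}(\omega)$, let $b^{m}$ be the first power at which their expansions differ. The difference is $b^{m}(l-l'+b\,\cdot\text{integer})$ with $l\ne l'\in L$. Hence $\widehat{\mu}(\lambda-\lambda')=0$, because $\widehat{\delta_{b^{-m-1}D}}$ vanishes at $b^{-1}(l-l')+\Bbb Z$ by the Hadamard property. This gives $(\Lambda-\Lambda)\setminus\{0\}\subset\mathcal{Z}(\hat\mu)$.

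\textbf{Step 3: the uniform lower bound.} We need $\inf|\widehat{\mu}_{>\alpha_k}(\lambda+\xi)|>0$ for $\lambda\in\Lambda_{\alpha_k}$ and $\xi\in[0,\delta]$. Write $\widehat{\mu}_{>\alpha}(x)=\widehat{\mu}(b^{-\alpha}x)$. The relevant elements are of three kinds.
\begin{itemize}
\item For core elements we use \eqref{3.3}: for $l\in T(\Omega_k)$ the point $b^{-(i_k+m_kj_k)}l$ is small, so $|\hat\mu|\ge\epsilon_2$.
\item For $l\notin T(\Omega_k)$, \eqref{3.1} gives $|\hat\mu|\ge\epsilon_1$ by the choice of $\ell^{(\xi)}_j$.
\item For augmented elements we use $|\hat\mu(\cdot+\tilde l)|\ge\epsilon_1$ near $0$, together with the separation condition \eqref{3.4}, which makes $b^{-(i_{k+1}+m_{k+1}j_{k+1})}\lambda$ smaller than $\delta$.
\end{itemize}
The main obstacle is controlling the product of the lower-order contributions uniformly in $k$, so that the constants do not degrade. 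This is where Lemma \ref{lem3.1} is needed, since it gives the bound $\lambda<2b^{\mathcal{I}_k+\mathcal{J}'_k+N}$ and the lacunarity of $\overline{\Lambda}$. I would first try to show that each element contributes a factor of at least $\epsilon$ at exactly one scale and factors tending geometrically to $1$ at all others.
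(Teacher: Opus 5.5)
Your final route, Lemma~\ref{lem2.1} with $\alpha_k=\mathcal{I}_k+\mathcal{J}'_k$ and $\Lambda_{\alpha_k}=\widetilde{\Lambda}^{(t,s)}_k(\omega)\cup\overline{\Lambda}^{(t,s)}_k(\omega)$, is the paper's route. Step~1 is fine once stated cleanly. The tails $b^{\alpha_k+(N+2)(j+1)}\tilde{l}$ are divisible by $b^{\alpha_k}$, so $\Lambda_{\alpha_k}$ is a set of $q^{\alpha_k}$ elements forming a complete set of representatives of $\mathcal{L}_{\alpha_k}$ modulo $b^{\alpha_k}$. Hence it is a spectrum of $\mu_{\alpha_k}$, with no need for ``frozen tails'', mutual orthogonality, or an appeal to \cite{AHH}. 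You should drop the first choice of $\alpha_k$ and the ``enlargement''. Also drop the claim that $\tilde{l}$ is congruent to an element of $\mathcal{L}_{N_1}$: it is false in general, since $\tilde{l}$ is only known to satisfy $|\widehat{\mu}(\tilde{l})|\ge 2\epsilon_1$. Step~2 is superfluous, because Lemma~\ref{lem2.1} already yields spectrality.

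The genuine gap is Step~3, the heart of the lemma, which you leave as a plan. The hard points are the newly added augmented elements $\lambda=\lambda^{(k)}_j+b^{\alpha_k+M}\tilde{l}$ with $M=(N+2)(j+1)$, where $M$ is unbounded because $\sigma_k\to\infty$. Put $x=b^{-\alpha_k}\lambda^{(k)}_j$; you must bound $|\widehat{\mu}(x+b^{M}\tilde{l})|$. Your bullets only give $|\widehat{\mu}(x)|\ge\min\{\epsilon_1,\epsilon_2\}$ and $|\widehat{\mu}(y+\tilde{l})|\ge\epsilon_1$ for small $y$, and neither applies at the point $x+b^{M}\tilde{l}$. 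The missing idea is the factorization
\[
\widehat{\mu}(x+b^{M}\tilde{l})=\widehat{\mu_M}(x+b^{M}\tilde{l})\,\widehat{\mu}(b^{-M}x+\tilde{l})=\widehat{\mu_M}(x)\,\widehat{\mu}(b^{-M}x+\tilde{l}),
\]
which uses the $b^M$-periodicity of $\widehat{\mu_M}$ (because $\mu_M$ is supported on $b^{-M}\mathbb{Z}$). Combine it with two facts:
\begin{itemize}
\item[(a)] $|\widehat{\mu_M}(x)|\ge|\widehat{\mu}(x)|\ge\min\{\epsilon_1,\epsilon_2\}$, since $\widehat{\mu}(x)=\widehat{\mu_M}(x)\widehat{\mu}(b^{-M}x)$ and $|\widehat{\mu}|\le1$;
\item[(b)] $0\le b^{-M}x<2b^{N}b^{-2(N+2)}$ by Lemma~\ref{lem3.1}, which is below $\delta_1$ for the large $N$ of the construction.
\end{itemize}
Together these give a lower bound $\min\{\epsilon_1,\epsilon_2\}\,\epsilon_1$, uniform in $k$ and $j$.

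Your heuristic of ``a factor $\ge\epsilon$ at exactly one scale and factors tending geometrically to $1$ elsewhere'' does not describe what happens here. The individual factors $\widehat{\delta_{b^{-i}D}}(x)$, $i\le M$, need not be near $1$; only their product is controlled, through \eqref{3.1} and \eqref{3.3}. Moreover there are two active blocks of scales, not one. The smallness of $b^{-M}x$ comes from the $(N+2)$-spacing of exponents, not from \eqref{3.4}. The role of \eqref{3.4} is to make the lower-order part $b^{-\alpha_k}\lambda'$, $\lambda'\in\Lambda^{(t,s)}_{k-1}(\omega)$, smaller than $\delta$ in your first two bullets; this also covers the old augmented points, which appear with $l=0$. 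Finally, you still need a uniform-continuity step to pass from $\xi=0$ to all $\xi\in[0,\delta]$.
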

\begin{proof}
We first verify that the truncated iterative set forms a valid spectrum for the finite convolution approximation of $\mu$. For each integer $k\ge 1$, define the $k$-level truncated spectral set
$\Lambda^{(t,s)}_{k}(\omega):=\widetilde{\Lambda}^{(t,s)}_{k}(\omega)\cup \overline{\Lambda}^{(t,s)}_{k}(\omega)$
and the finite convolution measure
\[
\mu_{\mathcal{I}_{k}+\mathcal{J}'_{k}}:=\delta_{b^{-1}D}*\delta_{b^{-2}D}*\cdots*\delta_{b^{-(\mathcal{I}_{k}+\mathcal{J}'_{k})}D}.
\]
Take any two distinct elements $\lambda_1,\lambda_2\in \Lambda^{(t,s)}_{k}(\omega)$. By the iterative construction of spectral sets, they admit the unified decomposition
\[
\lambda_i=\sum_{r=1}^{k} b^{\mathcal{I}_{r-1}+\mathcal{J}'_{r-1}}\psi_{r}(\ell^{(i)}_r,\omega_r)
+\sum_{r=c_i}^{k} b^{\mathcal{I}_{r}+\mathcal{J}'_{r}+(N+2)(j^{(i)}_r+1)}\tilde{l},
\]
where $1\le j^{(i)}_r\le \sigma_r$, $1\le c_i\le k+1$ ( the second summation vanishes identically when $c_i=k+1$ ) and the sequence $\{\ell^{(i)}_r\}$ satisfies
\[
\ell^{(i)}_r\in
\begin{cases}
T_{i_r+m_rj_r}\big(\Omega_{r}\big), & 1\le r\le c_i-1;\\
\mathcal{L}_{i_r+m_rj_r}, & c_i\le r\le k.
\end{cases}
\]
Let $\tau=\min\big\{1\le r\le k: \ell^{(1)}_r\neq \ell^{(2)}_r\big\}\le k$, which yields that $j^{(1)}_r=j^{(2)}_r$ for $1\le r\le \tau-1$. Direct computation of the difference yields
\[
\lambda_1-\lambda_2\in b^{\mathcal{I}_{\tau-1}+\mathcal{J}'_{\tau-1}}\big(\ell^{(1)}_{\tau}-\ell^{(2)}_{\tau}+b^{i_{\tau}+m_{\tau}j_{\tau}}\mathbb{Z}\big)
\subset b^{\mathcal{I}_{\tau-1}+\mathcal{J}'_{\tau-1}}\big((\mathcal{L}_{i_{\tau}+m_{\tau}j_{\tau}}-\mathcal{L}_{i_{\tau}+m_{\tau}j_{\tau}})\setminus\{0\}+b^{i_{\tau}+m_{\tau}j_{\tau}}\mathbb{Z}\big).
\]
This inclusion implies
$
\lambda_1-\lambda_2\in  \mathcal{Z}\bigl(\widehat{\mu_{\mathcal{I}_{k}+\mathcal{J}'_{k}}}\bigr).
$
Therefore, $\Lambda^{(t,s)}_{k}(\omega)$ is an orthogonal set of $\mu_{\mathcal{I}_{k}+\mathcal{J}'_{k}}$. The cardinality condition $\#\Lambda^{(t,s)}_{k}(\omega)=q^{\mathcal{I}_{k}+\mathcal{J}'_{k}}=\#\operatorname{supp}(\mu_{\mathcal{I}_{k}+\mathcal{J}'_{k}})$ implies that $\Lambda^{(t,s)}_{k}(\omega)$ is indeed a spectrum, where $\operatorname{supp}(\mu_{\mathcal{I}_{k}+\mathcal{J}'_{k}})$ denotes the support of $\mu_{\mathcal{I}_{k}+\mathcal{J}'_{k}}$.

We next establish the completeness of the limiting spectrum $\Lambda^{(t,s)}(\omega)$. For any $\lambda\in \Lambda^{(t,s)}_{k}(\omega)$, the structural construction of the spectral set guarantees the decomposition
\[
\lambda=\lambda'+b^{\mathcal{I}_{k-1}+\mathcal{J}'_{k-1}}\psi_{k}(\ell,\omega_k)+cb^{\mathcal{I}_{k}+\mathcal{J}'_{k}+(N+2)(v_\ell+1)}\tilde{l},
\]
where $\lambda'\in \Lambda^{(t,s)}_{k-1}(\omega)$, $\ell\in \mathcal{L}_{i_k+m_kj_k}$, $1\le v_\ell\le \sigma_{k}$ and  and the binary indicator $c\in\{0,1\}$ distinguishes elements from the core set $\widetilde{\Lambda}^{(t,s)}_{k}(\omega)$ ($c=0$) and the augmented set $\overline{\Lambda}^{(t,s)}_{k}(\omega)$ ($c=1$). It follows from Lemma \ref{lem3.1} and \eqref{3.4} that
\[
\big|b^{-(\mathcal{I}_{k}+\mathcal{J}'_{k})}\lambda'\big|
< \big|b^{-(\mathcal{I}_{k}+\mathcal{J}'_{k})}\cdot b^{\mathcal{I}_{k-1}+\mathcal{J}'_{k-1}+(N+2)(\sigma_{k-1}+2)}\big|
< \delta.
\]
Hence, by \eqref{3.1}-\eqref{3.3},
\[
\begin{aligned}
\big|\widehat{\mu}(b^{-(\mathcal{I}_{k}+\mathcal{J}'_{k})}\lambda)\big|&=\Big|\widehat{\mu}\Big(b^{-(\mathcal{I}_{k}+\mathcal{J}'_{k})}\big(\lambda'
+ b^{(\mathcal{I}_{k-1}+\mathcal{J}'_{k-1})}\psi_{k}(\ell,\omega_{k})
+ cb^{\mathcal{I}_{k}+\mathcal{J}'_{k}+(N+2)(v_\ell+1)}\tilde{l}\big)\Big)\Big| \\
&= \Big|\widehat{\mu_{(N+2)(v_\ell+1)}}\big(b^{-(\mathcal{I}_{k}+\mathcal{J}'_{k})}\lambda'
+ b^{-(i_k+m_kj_k)}\psi_{k}(\ell,\omega_{k})\big)\Big|\\
&\quad \times\big|\widehat{\mu}(b^{-(\mathcal{I}_{k}+\mathcal{J}'_{k}+(N+2)(v_\ell+1))}\lambda' + b^{-(i_k+m_kj_k+(N+2)(v_\ell+1))}\psi_{k}(\ell,\omega_{k})+ c\tilde{l})\big| \\
&\geq \Big|\hat{\mu}\big(b^{-(\mathcal{I}_{k}+\mathcal{J}'_{k})}\lambda'
+ b^{-(i_k+m_kj_k)}\psi_{k}(\ell,\omega_{k})\big)\Big|\\
&\quad \times\big|\widehat{\mu}(b^{-(\mathcal{I}_{k}+\mathcal{J}'_{k}+(N+2)(v_\ell+1))}\lambda' + b^{-(i_k+m_kj_k+(N+2)(v_\ell+1))}\psi_{k}(\ell,\omega_{k})+ c\tilde{l})\big| \\
&\ge \epsilon^2.
\end{aligned}
\]
where
$$
0\leq b^{-(\mathcal{I}_{k}+\mathcal{J}'_{k}+(N+2)(v_\ell+1))}\lambda' + b^{-(i_k+m_kj_k+(N+2)(v_\ell+1))}\psi_{k}(\ell,\omega_{k})< b^{-(N+2)}(1+\delta)<\delta.
$$
Furthermore, since $\widehat{\mu}$ is uniformly continuous on $\mathbb{R}$, there exists a constant $\delta_3>0$ such that
\[
\big|\widehat{\mu}(x)-\widehat{\mu}(y)\big|<\frac12\epsilon^2
\]
for any $|x-y|<\delta_1$. This uniform continuity yields that the lower bound
\[
\big|\widehat{\mu}\big(b^{-(\mathcal{I}_{k}+\mathcal{J}'_{k})}(\xi+\lambda)\big)\big|
\ge \big|\widehat{\mu}(b^{-(\mathcal{I}_{k}+\mathcal{J}'_{k})}\lambda)\big|
-\big|\widehat{\mu}\big(b^{-(\mathcal{I}_{k}+\mathcal{J}'_{k})}(\xi+\lambda)\big)-\widehat{\mu}(b^{-(\mathcal{I}_{k}+\mathcal{J}'_{k})}\lambda)\big|
\ge \frac12\epsilon^2
\]
valid for all $\xi\in[0,\delta_3]$, $\lambda\in \Lambda^{(t,s)}_{k}(\omega)$ and $k\ge 1$. Accordingly, we obtain the uniform positive lower bound
$$
\inf_{\xi\in[0,\delta_3],\,\lambda\in \Lambda^{(t,s)}_{k}(\omega)}\big|\widehat{\mu_{>\mathcal{I}_{k}+\mathcal{J}'_{k}}}(\xi+\lambda)\big|
=\inf_{\xi\in[0,\delta_3],\,\lambda\in \Lambda^{(t,s)}_{k}(\omega)}\big|\widehat{\mu}\big(b^{-(\mathcal{I}_{k}+\mathcal{J}'_{k})}(\xi+\lambda)\big)\big|
\ge \frac12\epsilon^2,
$$
It follows from Lemma \ref{lem2.1} that $\Lambda^{(t,s)}(\omega)$ is a spectrum for $\mu$.
\end{proof}

We now prove Theorem \ref{th1.3}-\ref{th1.4} using the above results.

\begin{proof}[{\bf The proof of Theorem \ref{th1.3}}] We firstly consider $t=\alpha \frac{\log q}{\log b}$ with $\alpha\in (0,1)\cap \Bbb Q^c$ and $s\in [0,\infty]$. Let $\Lambda^{(t,s)}(\omega):=\widetilde{\Lambda}^{(t,s)}(\omega)\cup \overline{\Lambda}^{(t,s)}(\omega)$ be given by \eqref{3.8}. To establish that $\dim_{Be}\big(\Lambda^{(t,s)}(\omega)\big)=t, D^+_t\big(\Lambda^{(t,s)}(\omega)\big)=s$, we first note that by Lemmas \ref{lem2.5} and \ref{lem3.1},
we have $\dim_{Be}\big(\overline{\Lambda}^{(t,s)}(\omega)\big)=0$. Therefore, by Lemma \ref{lem2.4},
it suffices to prove
\[
\dim_{Be}\big(\widetilde{\Lambda}^{(t,s)}(\omega)\big)=t,\quad D^+_t\big(\widetilde{\Lambda}^{(t,s)}(\omega)\big)=s.
\]
From Lemmas \ref{lem3.1}, we have
\[
\widetilde{\Lambda}^{(t,s)}_k(\omega)\subset \{0,1,\cdots, 2b^{\mathcal{I}_k+\mathcal{J}'_k+N}\}.
\]
It implies the cardinality estimate:
\[
\#\Big(\widetilde{\Lambda}^{(t,s)}(\omega)\cap (-2b^{\mathcal{I}_k+\mathcal{J}'_k+N},2b^{\mathcal{I}_k+\mathcal{J}'_k+N})\Big)\geq \#\widetilde{\Lambda}^{(t,s)}_k(\omega)=q^{\mathcal{J}_k}.
\]
It follows form Lemma \ref{lem2.3} and \eqref{2.2} that
\begin{align*}
\dim_{Be}\big(\widetilde{\Lambda}^{(t,s)}(\omega)\big)&= \varlimsup_{h\to\infty}\sup_{x\in\mathbb{R}}
\frac{\log \#\big(\widetilde{\Lambda}^{(t,s)}(\omega) \cap \big(x-h,\,x+h\big)\big)}{\log h}\\
&\geq \varlimsup_{k\to\infty}\frac{\log \#\big(\widetilde{\Lambda}^{(t,s)}(\omega) \cap (-2b^{\mathcal{I}_k+\mathcal{J}'_k+N},2b^{\mathcal{I}_k+\mathcal{J}'_k+N})\big)}{\log 2b^{\mathcal{I}_k+\mathcal{J}'_k}}\\
&\geq \varlimsup_{k\to\infty}\frac{\log q^{\mathcal{J}_k}}{\log 2b^{\mathcal{I}_k+\mathcal{J}'_k+N}}=t.
\end{align*}
\par From Lemma \ref{lem3.1}, when $k$ large enough, we have
\begin{align*}
\widetilde{\Lambda}^{(t,s)}_{k}(\omega)\subset \Big[0, 2b^{\mathcal{I}_k+\mathcal{J}'_k}\Big].
\end{align*}
It implies the cardinality estimate:
\[
\#\Big(\widetilde{\Lambda}^{(t,s)}(\omega)\cap \Big[0, 2b^{\mathcal{I}_k+\mathcal{J}'_k}\Big]\Big)\geq q^{\mathcal{J}_k}.
\]
It follows from \eqref{2.2} that
\begin{align*}
D^+_t\big(\widetilde{\Lambda}^{(t,s)}(\omega)\big)&= \varlimsup_{h\to\infty}\sup_{x\in\mathbb{R}}
\frac{\#\big(\widetilde{\Lambda}^{(t,s)}(\omega) \cap \big[x-h,\,x+h\big]\big)}{h^t}\geq \varlimsup_{k\to\infty}\frac{\#\big(\widetilde{\Lambda}^{(t,s)}(\omega) \cap \Big[0, 2b^{\mathcal{I}_k+\mathcal{J}'_k}\Big]}{b^{(\mathcal{I}_k+\mathcal{J}'_k)t}}\\
&\geq \varlimsup_{k\to\infty}\frac{ q^{\mathcal{J}_k}}{b^{(\mathcal{I}_{k+1}+\mathcal{J}'_{k})t}}=s.
\end{align*}
\par In the following, we prove $\dim_{Be}\big(\widetilde{\Lambda}^{(t,s)}(\omega)\big)\leq t$ and $D^+_t\big(\widetilde{\Lambda}^{(t,s)}(\omega)\big)\leq s$.
Fix $x\in\mathbb{R}$ and consider the counting set
\[
U_{x}:=\bigl(x-h,x+h\bigr)\cap\widetilde{\Lambda}^{(t,s)}(\omega).
\]
For $b^{\mathcal{I}_k+\mathcal{J}'_k}\leq h<b^{\mathcal{I}_k+\mathcal{J}'_k+1}$ with $k$ large enough, take two distinct  $\lambda_1,\lambda_2\in U_{x}$ with $\lambda_1>\lambda_2$, they can be written as
\[
\lambda_j=\sum_{r=1}^{k_{j}} b^{\mathcal{I}_{r-1}+\mathcal{J}'_{r-1}}\big(\ell_{r}^{(j)}+b^{i_{r}+m_rj_r}\phi_{i_{r}+m_rj_r}(\ell_{r}^{(j)},\omega_{r})\big)=\sum_{r=1}^{k_{j}} b^{\mathcal{I}_{r-1}+\mathcal{J}'_{r-1}}\ell_{r}^{(j)},\quad
\ell_{r}^{(j)}\in T_{i_{r}+m_rj_r}\big(\Omega_{r}\big),
\]
where $\Omega_r=\Theta^{i_r}\Xi^{(m_r-n_r)j_r}\Theta^{n_rj_r}, j=1,2$. We have following claim.
\begin{claim}\label{claim1}
We claim that $r^*:=\max\{r: \ell_{r}^{(1)}\neq \ell_{r}^{(2)}\}\leq k$.
\end{claim}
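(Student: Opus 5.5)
We argue by contradiction: if the two expansions differed at some level beyond $k$, the difference $\lambda_1-\lambda_2$ would be too large to fit in the window $(x-h,x+h)$. Both $\lambda_1,\lambda_2$ lie in $U_x$, so
\[
0<\lambda_1-\lambda_2<2h<2b^{\mathcal{I}_k+\mathcal{J}'_k+1}.
\]
Since $\ell^{(j)}_r=0$ for $r>k_j$, we may extend both expansions by zeros to a common length $K=\max\{k_1,k_2\}$. Suppose, to get a contradiction, that $r^*\ge k+1$. By definition of $r^*$ the digits $\ell^{(1)}_r,\ell^{(2)}_r$ agree for $r>r^*$, so those terms cancel in the difference.

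The key step is a lower bound for $|\lambda_1-\lambda_2|$ coming from the top differing block. Write
\[
\lambda_1-\lambda_2=\sum_{r<r^*}b^{\mathcal{I}_{r-1}+\mathcal{J}'_{r-1}}\big(\ell^{(1)}_r-\ell^{(2)}_r\big)+b^{\mathcal{I}_{r^*-1}+\mathcal{J}'_{r^*-1}}\big(\ell^{(1)}_{r^*}-\ell^{(2)}_{r^*}\big).
\]
Each $\ell\in T_{i_r+m_rj_r}(\Omega_r)$ has the form $\sum_{p}T(e_p)b^{p-1}$, where the digits $e_p$ vanish for $p\le i_r$ and for $p>i_r+(m_r-n_r)j_r$. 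Hence every nonzero element of $T_{i_r+m_rj_r}(\Omega_r)-T_{i_r+m_rj_r}(\Omega_r)$ is an element of $(\mathcal{L}_{n}-\mathcal{L}_{n})\setminus\{0\}$ (with $n=(m_r-n_r)j_r$) multiplied by $b^{i_r}$. Two distinct elements of $\mathcal{L}_n$ are congruent modulo $b$ to distinct elements of $L$ at their first differing digit, because $L$ is a complete-type residue set in $\{0,\dots,b-1\}$ (Hadamard triple, so the $l_i$ are distinct mod $b$). So this nonzero difference is at least $b^{i_r}\cdot b^{p_0-1}\ge b^{i_r}$ in absolute value, and in particular
\[
\big|\ell^{(1)}_{r^*}-\ell^{(2)}_{r^*}\big|\ge b^{i_{r^*}}.
\]
The top block therefore contributes at least $b^{\mathcal{I}_{r^*-1}+\mathcal{J}'_{r^*-1}+i_{r^*}}$.

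For the lower-order blocks, Lemma \ref{lem3.1} (the statement that $\widetilde{\Lambda}_{k}$ lies in $[0,b^{\mathcal{I}_{k}+\mathcal{J}'_{k}})$ for large $k$) gives
\[
\Big|\sum_{r<r^*}\cdots\Big|<b^{\mathcal{I}_{r^*-1}+\mathcal{J}'_{r^*-1}}.
\]
Combining the two bounds,
\[
|\lambda_1-\lambda_2|\ge b^{\mathcal{I}_{r^*-1}+\mathcal{J}'_{r^*-1}}\big(b^{i_{r^*}}-1\big)\ge b^{\mathcal{I}_{k}+\mathcal{J}'_{k}}\big(b^{i_{r^*}}-1\big)\quad\text{when } r^*\ge k+1.
\]
Since $i_{r}$ can be taken large (Remark \ref{rem2.7}), we may assume $i_r\ge 2$, so this exceeds $2b^{\mathcal{I}_k+\mathcal{J}'_k+1}$ whenever $b^{i_{r^*}}-1>2b$; if necessary we enlarge all $i_r\ge 3$. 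That contradicts the window bound, so $r^*\le k$.

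The main obstacle is making the digit-separation bound $|\ell^{(1)}_{r^*}-\ell^{(2)}_{r^*}|\ge b^{i_{r^*}}$ rigorous when $L$ is not literally $\{0,\dots,q-1\}$. We handle it by working modulo $b^{p}$ at the lowest differing position $p$, using that the elements of $L$ are distinct residues mod $b$, which follows from $(b,D,L)$ being a Hadamard triple. The carry-free lower-order estimate from Lemma \ref{lem3.1} then closes the argument.
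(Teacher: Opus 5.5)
Your argument is correct and is essentially the paper's proof. Assuming $r^*\ge k+1$, the top differing block contributes at least $b^{\mathcal{I}_{r^*-1}+\mathcal{J}'_{r^*-1}+i_{r^*}}$, the lower blocks contribute less than $b^{\mathcal{I}_{r^*-1}+\mathcal{J}'_{r^*-1}}$ (the paper uses the cruder $4b^{\mathcal{I}_{r^*-1}+\mathcal{J}'_{r^*-1}}$), and the resulting gap exceeds $2h$.

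Working with $|\lambda_1-\lambda_2|$ lets you skip the paper's sign step. The bound $|\ell^{(1)}_{r^*}-\ell^{(2)}_{r^*}|\ge b^{i_{r^*}}$, which you flag as the main obstacle, is actually immediate: both are distinct integers divisible by $b^{i_{r^*}}$, so the residue argument is unnecessary.
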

\noindent{\bf {The proof of Claim \ref{claim1} }} Otherwise, $\ell_{r^*}^{(1)}\neq \ell_{r^*}^{(2)}$ with $r^*\geq k+1$. Since $\lambda_1>\lambda_2$, then $\ell_{r^*}^{(1)}>\ell_{r^*}^{(2)}$. Indeed, if $\ell_{r^*}^{(1)}<\ell_{r^*}^{(2)}$, then
\begin{align*}
\lambda_2-\lambda_1&=b^{\mathcal{I}_{r^*-1}+\mathcal{J}'_{r^*-1}}(\ell_{r^*}^{(2)}-\ell_{r^*}^{(1)})+\sum_{r=1}^{r^*-1} b^{\mathcal{I}_{r-1}+\mathcal{J}'_{r-1}}(\ell_{r}^{(2)}-\ell_{r}^{(1)})\\
&\geq b^{\mathcal{I}_{r^*-1}+\mathcal{J}'_{r^*-1}}\cdot b^{i_{r^*}}-4 b^{\mathcal{I}_{r^*-1}+\mathcal{J}'_{r^*-1}}>0.
\end{align*}
It is a  contradiction. It follows that
\begin{align*}
\lambda_1-\lambda_2&=b^{\mathcal{I}_{r^*-1}+\mathcal{J}'_{r^*-1}}(\ell_{r^*}^{(1)}-\ell_{r^*}^{(1)})+\sum_{r=1}^{r^*-1} b^{\mathcal{I}_{r-1}+\mathcal{J}'_{r-1}}(\ell_{r}^{(1)}-\ell_{r}^{(2)})\\
&\geq b^{\mathcal{I}_{r^*-1}+\mathcal{J}'_{r^*-1}}\cdot b^{i_{r^*}}-4 b^{\mathcal{I}_{r^*-1}+\mathcal{J}'_{r^*-1}}>2b^{\mathcal{I}_k+\mathcal{J}'_k+1}>2h,
\end{align*}
which is contradict with $\lambda_1,\lambda_2\in U_{x}$. Hence we finish the claim. \qed
\par From Claim \ref{claim1}, we know that for any $\lambda\in U_{x}$, there exist $k^*$ and a common sequence $\{l^*_{r}:l^*_{r}\in T_{i_r+m_rj_r}(\Omega_r)\}_{r=k+1}^{k^*}$ such that
$$
\lambda=\sum_{r=1}^{k}  b^{\mathcal{I}_{r-1}+\mathcal{J}'_{r-1}}\ell_{r}^{(j)}+\sum_{r=k+1}^{k^*}  b^{\mathcal{I}_{r-1}+\mathcal{J}'_{r-1}}\ell_{r}^{*}.
$$
It follows that $\# U_x\leq q^{\mathcal{J}_k}$. Applying Lemma \ref{lem2.3} and \eqref{2.2} again yields
\[
\begin{aligned}
\dim_{Be}\bigl(\widetilde{\Lambda}^{(t,s)}(\omega)\bigr)
&=\varlimsup_{h\to\infty}\sup_{x\in\mathbb{R}}
\frac{\log\#\bigl(\widetilde{\Lambda}^{(t,s)}(\omega)\cap\bigl(x-h,\,x+h\bigr)\bigr)}{\log h}\\[4pt]
&\le\varlimsup_{k\to\infty}
\frac{\log q^{\mathcal{J}_{k}}}{\log b^{\mathcal{I}_k+\mathcal{J}'_k}}= t.
\end{aligned}
\]
From \eqref{2.2}, it follows that
\[
\begin{aligned}
D^+_t&=\varlimsup_{h\to\infty}\sup_{x\in\mathbb{R}}
\frac{\#\bigl(\widetilde{\Lambda}^{(t,s)}(\omega)\cap\bigl(x-h,\,x+h\bigr)\bigr)}{h^t}\\[4pt]
&\le\varlimsup_{k\to\infty}
\frac{ q^{\mathcal{J}_{k}}}{ b^{(\mathcal{I}_k+\mathcal{J}'_k)t}}= s.
\end{aligned}
\]
Hence we finish the proof of the theorem for $t=\alpha \frac{\log q}{\log b}$ with $\alpha\in (0,1)\cap \Bbb Q^c$ and $s\in [0,\infty]$.
\par The proof for $t=\alpha'\frac{\log (q^{m_0}-1)}{\log b^{m_0}}, \alpha'\in (0,1)\cap \Bbb Q^c, s\in [0,+\infty]$ is completely analogous, so we omit the details. It follows Theorem \ref{th1.3}.
\end{proof}

\begin{proof}[{\bf The proof of Theorem \ref{th1.4}}]We just prove it for $t=\alpha \frac{\log q}{\log b},\alpha\in (0,1)\cap \Bbb Q^c, s\in [0,\infty]$, since the proof for $t=\alpha'\frac{\log (q^{m_0}-1)}{\log b^{m_0}}$ with $\alpha'\in (0,1)\cap \Bbb Q^c$ is similar. From Theorem \ref{th1.3}, we know that for any $t\in (0,\frac{\log q}{\log b}), s\in [0,+\infty], \omega= \omega_1\omega_2\cdots \in \{1,2\}^\infty$, $\widetilde{\Lambda}^{(t,s)}(\omega)$ is a spectrum of $\mu$ such that $\dim_{Be}\big(\widetilde{\Lambda}^{(t,s)}(\omega)\big)=t, D^+_t\big(\widetilde{\Lambda}^{(t,s)}(\omega)\big)=s$. To show that $\mathcal{V}^{(s)}_{t}(\mu)$ has the cardinality of the continuum, we just need to prove that $\Lambda^{(t,s)}(\omega)\neq \Lambda^{(t,s)}(\eta)$ for any two distinct sequences $\omega=\omega_1\omega_2\cdots\neq \eta=\eta_1\eta_2\cdots$ in $\{1,2\}^\infty$.

\par Let $\tau=\min\{k\in\mathbb{N}^*:\omega_k\neq \eta_k\}$ denote the first discrepant position of $\omega$ and $\eta$. Without loss of generality, we assume $\omega_{\tau}=1$ and $\eta_{\tau}=2$. We first construct a distinguished element associated with the sequence $\omega$:
\[
\begin{aligned}
\lambda^{*}&=b^{\mathcal{I}_{\tau-1}+\mathcal{J}'_{\tau-1}}\bigl(b^{i_{\tau}+(m_{\tau}-n_{\tau})j_{\tau}}l_2+b^{i_{\tau}+m_{\tau}j_{\tau}}\phi_{\tau}(b^{i_{\tau}+(m_{\tau}-n_{\tau})j_{\tau}}
l_2,\omega_{\tau})\bigr)\\
&=b^{\mathcal{I}_{\tau}+\mathcal{J}'_{\tau-1}+(m_{\tau}-n_{\tau})j_{\tau}}l_2+b^{\mathcal{I}_{\tau}+\mathcal{J}'_{\tau}}l^{(b^{-(i_\tau+m_\tau j_\tau)}l_2)}_1,
\end{aligned}
\]
By the structural definition of spectral sets, every element $\lambda\in \Lambda^{(t,s)}(\eta)$ admits the following decomposition
\[
\begin{aligned}
\lambda &= \sum_{r=1}^{k} b^{\mathcal{I}_{r-1}+\mathcal{J}'_{r-1}} \psi_{r}(\ell_r,\eta_r)
+\sum_{r=c}^{k} b^{\mathcal{I}_{r}+\mathcal{J}'_{r}+(N+2)(w_r+1)}\tilde{l} \\
&=\sum_{r=1}^{k} b^{\mathcal{I}_{r-1}+\mathcal{J}'_{r-1}}(\ell_r+\phi_{r}(\ell_r,\eta_r)
+\sum_{r=c}^{k} b^{\mathcal{I}_{r}+\mathcal{J}'_{r}+(N+2)(w_r+1)}\tilde{l}
\end{aligned}
\]
for some $1\le w_r\le \sigma_r$, $1\le c\le k+1$ and $\ell_r\in \mathcal{L}_{i_r+m_rj_r}$. According to the construction of $\lambda^*$, we have the modular relation
\[
b^{-(\mathcal{I}_{\tau-1}+\mathcal{J}'_{\tau-1})}\lambda^{*}=b^{i_{\tau}+(m_{\tau}-n_{\tau})j_{\tau}} \pmod{b^{(i_{\tau}+m_{\tau}j_{\tau})}}
\]
Suppose, for contradiction, that $\lambda^{*}=\lambda$ for some $\lambda\in\Lambda^{(t,s)}(\eta)$. It is necessary that $\ell_r=0$ for all $1\le r\le \tau-1$, and the scaled element satisfies
$$
b^{-(\mathcal{I}_{\tau-1}+\mathcal{J}'_{\tau-1})}\lambda\equiv b^{i_{\tau}+(m_{\tau}-n_{\tau})j_{\tau}} \pmod{b^{i_{\tau}+m_{\tau}j_{\tau}}}.
$$
Combined with the constraint $\ell_{\tau}\in \mathcal{L}_{i_{\tau}+m_{\tau}j_{\tau}}$, this congruence forces $\ell_{\tau}=b^{i_{\tau}+(m_{\tau}-n_{\tau})j_{\tau}}$. Nevertheless, we compute the difference between $\lambda$ and $\lambda^*$:
\[
\begin{aligned}
\lambda-\lambda^{*} &= b^{\mathcal{I}_{\tau-1}+\mathcal{J}'_{\tau-1}}(\phi_{\tau}(\ell_\tau,\eta_\tau)-\phi_{\tau}(\ell_\tau,\omega_\tau))
+\sum_{r=\max\{c,\tau+1\}}^{k} b^{\mathcal{I}_{r}+\mathcal{J}'_{r}+(N+2)(w_r+1)}\tilde{l}\\
&=b^{\mathcal{I}_{\tau-1}+\mathcal{J}'_{\tau-1}}\Big(l^{(b^{-(i_\tau+m_\tau j_\tau)}l_2)}_2-l^{(b^{-(i_\tau+m_\tau j_\tau)}l_2)}_1\Big)+\sum_{r=\max\{c,\tau+1\}}^{k} b^{\mathcal{I}_{r}+\mathcal{J}_{r}+2(w_r+N)}>0,
\end{aligned}
\]
as $l^{(b^{-(i_\tau+m_\tau j_\tau)}l_2)}_2>l^{(b^{-(i_\tau+m_\tau j_\tau)}l_2)}_1$.This yields a contradiction, proving that $\lambda^{*}\notin \Lambda^{(t,s)}(\eta)$.

The above argument demonstrates that distinct symbolic sequences generate distinct spectral sets, i.e., $\Lambda^{(t,s)}(\omega)\neq \Lambda^{(t,s)}(\eta)$ whenever $\omega\neq \eta$. Since the sequence space $\{1,2\}^\infty$ has the cardinality of the continuum, the spectral family $\mathcal{V}^{(s)}_{t}(\mu)$ also possesses the cardinality of the continuum. This completes the proof.
\end{proof}

\bigskip

\noindent {\bf Conflict of interest statement}
\bigskip
\par There is no conflict of interest.

\end{document}